\theoremstyle{plain}        \newtheorem{thm}{Theorem}
\theoremstyle{plain}        \newtheorem{pro}[thm]{Proposition}
\theoremstyle{plain}        \newtheorem{lem}[thm]{Lemma}
\theoremstyle{plain}        
\theoremstyle{plain}        \newtheorem{rem}[thm]{Remark}
\theoremstyle{plain}        
\theoremstyle{plain}        
\numberwithin{equation}{section}
\numberwithin{thm}{section}
\begin{document}

\title{On compactness of the $\bar{\partial}$-Neumann operator on Hartogs domains}

\author{Muzhi Jin}
\date{}
\keywords{compactness; $\bar{\partial}$-Neumann operator; Property $(P_q)$; Hankel operator; Hartogs domain}
\subjclass{32W05;32W10;47B35}

\maketitle
\begin{abstract}
\sloppy We show that Property $(P)$ of $\partial\Omega$, compactness of the $\bar{\partial}$-Neumann operators $N_1$, and compactness of Hankel operator on a smooth bounded pseudoconvex Hartogs domain $\Omega={\{(z, w_1, w_2,\dots, w_n)\in \mathbb{C}^{n+1} \mid\sum_{k=1}^{n} |w_k|^2<e^{-2\varphi(z)}, z\in D\}}$ are equivalent, where $D$ is a smooth bounded connected open set in $\mathbb{C}$. 
\end{abstract}

\section{Introduction}

Let $\Omega$ be a bounded pseudoconvex domain in $\mathbb{C}^n$, $\mathit{L}_{(0,q)}^{2}(\Omega)$ be the space of $(0,q)$-forms with coefficients in $L^{2}(\Omega)$. The complex Laplacian operator $\Box_q=\bar{\partial}\bar{\partial}^{*}+\bar{\partial}^{*}\bar{\partial}$ is a densely defined, closed, self-adjoint operator on $L_{(0,q)}^{2}(\Omega)$. By H\"{o}rmander's $L^2$ estimates \cite{H}, $\Box_q$ has a bounded inverse, called the $\bar{\partial}$-Neumann operator $N_q$ and $\bar\partial^* N_q$ provides Kohn's canonical solution operator of $\bar\partial$ equation. The regularity of the canonical solution to $\bar\partial$ equation is one of the most fundamental problems in several complex variables and partial differential equations. The deep relation between the compactness of $N_q$ and the global regularity of canonical solution of $\bar\partial$ follows from the result of Kohn and Nirenberg \cite{KN} that if $N_q$ is compact in $L^2_{(0,q)}(\Omega)$, then $N_q$ is compact in the $L^2$-Sobolev spaces, thus the global regularity of the canonical solution holds. For the deep theory of regularity of $\bar{\partial}$-Neumann problem, the interested readers may refer to \cite{BS}.

 Catlin introduced a concept of Property $(P_q)$: 
 A compact set $K\in\mathbb{C}^{n}$ is said to satisfy Property $(P_q)$ if for every positive number $M$, there exists a neighborhood $U$ of $K$ and a $C^2$ function $\lambda$ on $U$, $0 \leq \lambda \leq 1$, such that for all $z\in K$, the sum of any $q$ eigenvalues of the Hermitian form $(\frac{\partial^{2}\lambda}{\partial z_j\partial\bar{z}_k}(z))_{jk}$ is at least $M$. 
Moreover Catlin proved the following fundamental result (see Theorem 1 in \cite{Ca}, \cite{Str1} as well) which characterized the compactness of $N_q$ in $L^2$. 

\begin{thm}\label{Catlin;thm1}
Let $\Omega$ be a bounded pseudoconvex domain in $\mathbb{C}^{n}$. Let $1\leq q\leq n$. If $\partial\Omega$ satisfies Property $(P_q)$, then $N_q$ is compact.
\end{thm}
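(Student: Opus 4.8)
The plan is to follow the classical route: reduce compactness of $N_q$ to an equivalent \emph{compactness estimate}, and then extract that estimate from Property $(P_q)$ by means of the weighted Kohn--Morrey--H\"ormander identity. Since $N_q=\Box_q^{-1}$ is bounded and self-adjoint, $N_q$ is compact on $L^2_{(0,q)}(\Omega)$ if and only if the embedding of $\mathrm{dom}(\bar\partial)\cap\mathrm{dom}(\bar\partial^{*})$, equipped with the graph norm $(\|u\|^2+\|\bar\partial u\|^2+\|\bar\partial^{*}u\|^2)^{1/2}$, into $L^2_{(0,q)}(\Omega)$ is compact; a standard argument (the Rellich lemma $L^2\hookrightarrow H^{-1}$ on the bounded set $\Omega$ upgrades $H^{-1}$-convergence to $L^2$-convergence in one direction, and a contradiction argument gives the other) shows this is in turn equivalent to: for every $\varepsilon>0$ there is $C_\varepsilon>0$ with
\[
\|u\|^2\le\varepsilon\bigl(\|\bar\partial u\|^2+\|\bar\partial^{*}u\|^2\bigr)+C_\varepsilon\|u\|_{-1}^2,
\]
valid for all $u\in\mathrm{dom}(\bar\partial)\cap\mathrm{dom}(\bar\partial^{*})$, where $\|\cdot\|_{-1}$ denotes the Sobolev $(-1)$-norm. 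Taking $\varepsilon=1/M$, it suffices to produce, for each $M>0$, a constant $C_M$ such that $M\|u\|^2\le\|\bar\partial u\|^2+\|\bar\partial^{*}u\|^2+C_M\|u\|_{-1}^2$.

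Fix $M$ and apply Property $(P_q)$ to $K=\partial\Omega$: this yields a neighborhood $U$ and $\lambda\in C^2(U)$ with $0\le\lambda\le1$ whose complex Hessian has the prescribed eigenvalue-sum bound on $\partial\Omega$, hence, after shrinking $U$ and replacing $M$ by $M/2$, on all of $U$. Choose cut-offs with $\chi_0+\chi_1\equiv1$, $\operatorname{supp}\chi_0\subset U$, $\chi_0\equiv1$ near $\partial\Omega$, and $\operatorname{supp}\chi_1\Subset\Omega$, and write $u=\chi_0u+\chi_1u$. The interior piece $\chi_1u$ is compactly supported, so by ellipticity of $\Box_q$ in the interior one has $\|\chi_1u\|_1^2\lesssim\|\bar\partial(\chi_1u)\|^2+\|\bar\partial^{*}(\chi_1u)\|^2+\|\chi_1u\|^2\lesssim\|\bar\partial u\|^2+\|\bar\partial^{*}u\|^2+\|u\|^2$, the last step because $[\bar\partial,\chi_1]$ and $[\bar\partial^{*},\chi_1]$ are of order zero; interpolating, $\|\chi_1u\|^2\le\eta\|\chi_1u\|_1^2+C_\eta\|u\|_{-1}^2$, which handles this term with an arbitrarily small coefficient $\eta$.

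For the boundary piece $v:=\chi_0u\in\mathrm{dom}(\bar\partial)\cap\mathrm{dom}(\bar\partial^{*})$, supported where $\lambda$ is defined, I would use the weighted basic identity with weight $t\lambda$ ($t>0$ a free parameter):
\[
\|\bar\partial v\|_{t\lambda}^2+\|\bar\partial^{*}_{t\lambda}v\|_{t\lambda}^2=t\sum_{|K|=q-1}\sum_{j,k}\int_\Omega\frac{\partial^2\lambda}{\partial z_j\partial\bar z_k}v_{jK}\overline{v_{kK}}\,e^{-t\lambda}+R,
\]
where $R\ge0$ collects the boundary integral of the Levi form of $\partial\Omega$ (nonnegative by pseudoconvexity) and a gradient term. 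Dropping $R$, using the eigenvalue-sum hypothesis and $e^{-t\lambda}\ge e^{-t}$, the right-hand side is $\ge tMe^{-t}\|v\|^2$; on the left, $\|\bar\partial v\|_{t\lambda}^2\le\|\bar\partial v\|^2$ and $\bar\partial^{*}_{t\lambda}v=\bar\partial^{*}v+t\,\sigma(\partial\lambda)v$ with $\sigma(\partial\lambda)$ a zeroth-order multiplier built from $\partial\lambda$, so $\|\bar\partial^{*}_{t\lambda}v\|_{t\lambda}^2\le2\|\bar\partial^{*}v\|^2+Ct^2\|\partial\lambda\|_\infty^2\|v\|^2$. Hence
\[
\bigl(tMe^{-t}-Ct^2\|\partial\lambda\|_\infty^2\bigr)\|v\|^2\le2\bigl(\|\bar\partial v\|^2+\|\bar\partial^{*}v\|^2\bigr).
\]

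\emph{The step I expect to be the main obstacle} is making the coefficient on the left positive and large: Property $(P_q)$ controls the size of $\lambda$ and the lower bound on its Hessian but not $\|\partial\lambda\|_\infty$, which in general grows with $M$. I would resolve this by first replacing $\lambda$ with a mollification $\lambda*\rho_\sigma$, $\sigma\sim M^{-1/2}$: averaging preserves $0\le\lambda\le1$ and the eigenvalue-sum bound on the Hessian (a pointwise linear condition on a self-adjoint form, stable under convex averages), while $\|\partial(\lambda*\rho_\sigma)\|_\infty\lesssim\sigma^{-1}$, so now $\|\partial\lambda\|_\infty^2\lesssim M$ (one only needs $\sigma$ smaller than the width of $U$, which is harmless once $M$ is large). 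Then choosing $t$ to be a small \emph{absolute} constant, small enough that $e^{-t}-C't\ge\tfrac12$, makes the coefficient $\gtrsim tM$, so $\|v\|^2\le C''M^{-1}(\|\bar\partial v\|^2+\|\bar\partial^{*}v\|^2)$, and pushing $\bar\partial,\bar\partial^{*}$ through $\chi_0$ gives $\|\chi_0u\|^2\le C'''M^{-1}(\|\bar\partial u\|^2+\|\bar\partial^{*}u\|^2+\|u\|^2)$. Adding the two pieces via $\|u\|^2\le2\|\chi_0u\|^2+2\|\chi_1u\|^2$, then taking $M$ large and $\eta$ small to absorb the leftover $\|u\|^2$ on the right, yields the estimate of the first paragraph with $\varepsilon$ comparable to $1/M$; letting $M\to\infty$ and invoking the reduction finishes the proof. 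Apart from taming $\|\partial\lambda\|_\infty$, the remaining ingredients — the functional-analytic equivalence, the interior elliptic estimate, and the commutator bookkeeping — are routine.
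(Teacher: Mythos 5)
A preliminary remark: the paper does not prove this statement — it is quoted as Catlin's theorem with a citation to [Ca] (see also [Str2]) — so the comparison below is with the known proof rather than with anything in the text. Your overall architecture is exactly the standard one: reduce compactness of $N_q$ to the compactness estimate $\|u\|^2\le\varepsilon(\|\bar\partial u\|^2+\|\bar\partial^{*}u\|^2)+C_\varepsilon\|u\|_{-1}^2$, split $u$ into an interior piece (interior ellipticity plus interpolation, with all uncontrolled constants dumped onto the $\|u\|_{-1}^2$ term) and a boundary piece, and feed the Property $(P_q)$ family into the weighted Kohn--Morrey--H\"ormander identity. You also correctly isolate the crux: the weighted adjoint produces a term of size $t^2\|\partial\lambda\|_\infty^2\|v\|^2$, and Property $(P_q)$ gives no control whatsoever on $\|\partial\lambda\|_\infty$.

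Your proposed fix does not close this gap. Mollifying at scale $\sigma\sim M^{-1/2}$ requires (i) $\lambda$ to be defined on a $\sigma$-neighborhood of the set where $\lambda*\rho_\sigma$ is used, and (ii) the eigenvalue-sum bound to hold on that $\sigma$-neighborhood, since the Hessian of $\lambda*\rho_\sigma$ at $z$ is a convex average of the Hessian of $\lambda$ over $B(z,\sigma)$. But Property $(P_q)$ furnishes, for each $M$, a neighborhood $U_M$ whose width is not quantitatively controlled, and it guarantees the eigenvalue bound only \emph{on} $\partial\Omega$; how far that bound propagates into $U_M$ depends on the modulus of continuity of $\partial\bar\partial\lambda_M$, which is also uncontrolled. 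Both quantities may shrink much faster than $M^{-1/2}$ as $M\to\infty$, so your parenthetical ``harmless once $M$ is large'' has the dependence backwards: larger $M$ means a possibly thinner $U_M$. The standard resolution avoids estimating $\|\partial\lambda\|_\infty$ altogether: one proves that there is a constant $C=C(\Omega)$ such that for \emph{every} $C^2$ function $\phi$ with $0\le\phi\le1$ near $\overline\Omega$ and every $u\in\mathrm{dom}(\bar\partial)\cap\mathrm{dom}(\bar\partial^{*})$,
\[
\sum_{|K|=q-1}\sum_{j,k}\int_\Omega\frac{\partial^2\phi}{\partial z_j\partial\bar z_k}\,u_{jK}\overline{u_{kK}}\,dV\le C\bigl(\|\bar\partial u\|^2+\|\bar\partial^{*}u\|^2\bigr),
\]
with $C$ independent of all derivatives of $\phi$ (this is Lemma 4.7 in [Str2]). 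The mechanism is a composition (``self-bounded gradient'') trick: running the identity with a weight of the form $e^{t\phi}$ (after normalizing $\phi$) produces in the complex Hessian the rank-one term $t^2e^{t\phi}\phi_j\phi_{\bar k}$, which paired with $u_{jK}\overline{u_{kK}}$ equals $t^2e^{t\phi}\bigl|\sum_j\phi_ju_{jK}\bigr|^2$ and absorbs precisely the gradient contribution of the weighted adjoint; the factors $e^{\pm t\phi}$ are harmless because $\phi$ is uniformly bounded. With that lemma substituted for your mollification step, the remainder of your argument is the standard proof.
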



Along this line of the study, the equivalence between Property $(P_q)$ of the boundary the domain and the compactness of the $\bar{\partial}$-Neumann operator $N_q$ has been established in some pseudoconvex domains. Fu and Straube proved that Property $(P_q)$ of the boundary and compactness of $N_q$ are equivalent on locally convexitiable domains \cite{FS1}. 
Christ and Fu proved that on smoothly bounded pseudoconvex Hartogs domain $\Omega$ in $\mathbb{C}^2$, the boundary $\partial\Omega$ satisfies Property $(P)$ if and only if $N_1$ is compact \cite{CF}. Here and from now on, we use $(P)$ to denote $(P_1)$ as the main focus is the compactness of $N_1$. Our note is motivated by the theorems in \cite{FS1} \cite{CF} to study the compactness of $N_1$ and Property $(P)$, and we try to generalize the result of Christ and Fu to the higher dimension.

The proof of Christ and Fu involves the intricate study of the asymptotic behavior of the first eigenvalues of Schr\"{o}dinger operators with magnetic and non-magnetic fields. We introduce the terminology of the Schr\"{o}dinger operator (more details can be found in \cite{FS3}, \cite{CF}). 
Let $D$ be a bounded domain in $\mathbb{C}$ and $\varphi\in\mathit{C}^2(\overline{D})$. $S_{\varphi}=-[(\partial_x+i\varphi_y)^{2}+(\partial_y-i\varphi_x)^{2}]+\Delta\varphi$ is the magnetic Schr\"{o}dinger operator and $S_{\varphi}^{0}=-\Delta+\Delta\varphi$ is the corresponding nonmagnetic Schr\"{o}dinger operator. Let $\lambda_{\varphi}(D)$ and $\lambda_{\varphi}^{0}(D)$ be the smallest eigenvalue of $S_{\varphi}$, $S_{\varphi}^{0}$ on $D$ respectively. Let $L_{\varphi}=-e^{\varphi}(\partial/\partial z)(e^{-\varphi} \cdot)=-\partial_{z}+\varphi_{z}$ be the first-order differential operator defined $L^{2}(D)$ in the sense of distributions and let $\bar{L}_{\varphi}=e^{-\varphi}(\partial/\partial\bar{z})(e^{\varphi}\cdot)=\partial_{\bar{z}}+\varphi_{\bar{z}}$ be the formal adjoint of $\mathit{L}_{\varphi}$.  Note that $S_{\varphi}=4\bar{L}_{\varphi} L_{\varphi}$. Therefore,
\begin{equation}\label{mj;eq1.1}
\begin{split}
\lambda_{\varphi}(D)&=\inf\{4\int_{D}|\mathit{L}_{\varphi} u|^{2}\,/\int_{D}|u|^{2}\,; u\in\mathit{C}_{0}^{\infty}(D), u\not\equiv0\}\\
&=\inf\{4\int_{D}|u_z|^{2}e^{2\varphi}\,/\int_{D}|u|^{2}e^{2\varphi}\,; u\in\mathit{C}_{0}^{\infty}(D), u\not\equiv0\}.
\end{split}
\end{equation}
The key step of their proof is the following deep theorem regarding Schr\"odinger operators (see Theorem 1.5 in \cite{CF}), which also plays a fundamental role in the proof of our result.

\begin{thm}\label{Christ-Fu;thm1}
Let $\varphi$ be subharmonic such that $\Delta\varphi$ is H\"{o}lder continuous of some positive order. If $\sup_m \lambda_{m\varphi}^{0}<\infty$, then $\lim\inf_{m\rightarrow\infty} \lambda_{m\varphi}<\infty$.
\end{thm}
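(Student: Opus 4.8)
The plan is to prove the implication directly, by producing, for infinitely many $m$, admissible functions in the variational problem \eqref{mj;eq1.1} whose quotient stays bounded. Since $\Delta\varphi\ge 0$ the operators $S^0_{m\varphi}=-\Delta+m\Delta\varphi$ increase with $m$, so $C_0:=\sup_m\lambda^0_{m\varphi}<\infty$, and we may take Dirichlet ground states $u_m\in H^1_0(D)$ with $\|u_m\|_{L^2(D)}=1$ and $\int_D|\nabla u_m|^2+m\int_D\Delta\varphi\,|u_m|^2=\lambda^0_{m\varphi}\le C_0$. Thus $\{u_m\}$ is bounded in $H^1_0(D)$ and $\int_D\Delta\varphi\,|u_m|^2\le C_0/m\to 0$, so the $u_m$ get pushed off $\{\Delta\varphi>\varepsilon\}$ for every fixed $\varepsilon>0$ while keeping bounded Dirichlet energy.

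Next I would identify the limiting profile. By Rellich a subsequence converges strongly in $L^2(D)$ and weakly in $H^1_0(D)$ to some $u_*$ with $\|u_*\|_{L^2(D)}=1$; since $\Delta\varphi$ is continuous (hence bounded) on $\overline D$, letting $m\to\infty$ gives $\int_D\Delta\varphi\,|u_*|^2=0$, so $u_*$ is a nonzero $H^1_0(D)$ function supported in $Z:=\{z\in D:\Delta\varphi(z)=0\}$, and in particular $|Z|>0$. In the favourable situation $\operatorname{int}Z\ne\emptyset$ one is done at once: on a disc $B\subset\operatorname{int}Z$ the function $\varphi$ is harmonic, $\varphi=\operatorname{Re}h$ for some $h$ holomorphic on $B$, and for any $0\ne\chi\in C_0^\infty(B)$ the trial function $g_m=\chi\,e^{-m\bar h}$ has $\partial_z g_m=(\partial_z\chi)e^{-m\bar h}$, so by \eqref{mj;eq1.1}
\begin{equation*}
\lambda_{m\varphi}(D)\;\le\;\frac{4\int_B|\partial_z\chi|^2\,e^{-2m\varphi}e^{2m\varphi}}{\int_B|\chi|^2\,e^{-2m\varphi}e^{2m\varphi}}\;=\;\frac{4\int_B|\partial_z\chi|^2}{\int_B|\chi|^2},
\end{equation*}
independent of $m$. (Equivalently this is the gauge change $v_m=\chi\,e^{im\psi}$, $\psi=\operatorname{Im}h$, which cancels the magnetic potential because the field $-m\Delta\varphi$ vanishes on $B$.)

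The substance of the theorem is the general case, where $Z$ need not have interior and $\varphi$ need not be harmonic on any open set; this is where the Hölder hypothesis enters. The point is that near $Z$ the function $\varphi$ is \emph{almost} harmonic: $\Delta\varphi\in C^\alpha$ with $\Delta\varphi\equiv 0$ on $Z$ forces $\Delta\varphi(z)\le C\,\operatorname{dist}(z,Z)^\alpha$. On a sublevel set $U=\{\Delta\varphi<\varepsilon\}$ one splits $\varphi=\tilde\varphi+\rho$ on each component, with $\tilde\varphi$ harmonic, equal to $\varphi$ on $\partial U$, and $\Delta\rho=\Delta\varphi$, $\rho|_{\partial U}=0$, so $\|\rho\|_{L^\infty}$ is small by the maximum principle; then, choosing a cutoff $\zeta$ adapted to where $u_m$ concentrates and the locally defined harmonic conjugate $\psi$ of $\tilde\varphi$, the candidate $v_m=\zeta\,e^{im\psi}u_m$ turns $L_{m\varphi}v_m$ into $-e^{im\psi}\zeta\,\partial_z u_m$ modulo a localization term $(\partial_z\zeta)u_m$ and a corrector term $m\,\rho_z\,\zeta\,u_m$. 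The main obstacle, and the technical heart of Christ and Fu's argument, is to bound $4\int|L_{m\varphi}v_m|^2/\int|v_m|^2$: the term coming from $\partial_z u_m$ is at most $C_0$ and the localization term is harmless if $\zeta$ transitions where $u_m$ is small, but one must make $m^2\int|\rho_z|^2|u_m|^2$ bounded along a subsequence, by balancing the scale $\varepsilon=\varepsilon(m)\to 0$ against the Schauder control of $\rho$ (hence of $\rho_z$) in terms of $\|\Delta\varphi\|_{L^\infty(U)}$ and against the Agmon-type decay of $u_m$ into $\{\Delta\varphi\ge\varepsilon\}$. This is the quantitative form of the Aharonov–Bohm principle that a thin, low-flux magnetic field cannot raise the ground-state energy, and it is precisely where "$\Delta\varphi$ Hölder continuous of positive order" — as opposed to merely continuous, or a genuine measure — is indispensable; it is also why the conclusion can only be asserted as a $\liminf$, since the construction controls the quotient only along those $m$ for which the profiles $u_m$ and the scales $\varepsilon(m)$ cooperate.
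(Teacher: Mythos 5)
The paper does not prove this statement; it quotes it verbatim as Theorem~1.5 of Christ and Fu \cite{CF} and uses it as a black box (it is exactly what gives the implication $(5)\Rightarrow(1)$ in Theorem~3.1, by contrapositive). So there is no ``paper's own proof'' to compare against. Re-deriving the Christ--Fu theorem is a genuinely independent and substantially harder undertaking than anything the paper itself carries out, and a citation is the right move unless you intend to give a complete argument.

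Evaluated on its own terms, your outline identifies the correct structure but has a real gap. The preliminary reductions are sound: the ground states $u_m$ are bounded in $H^1_0(D)$, a Rellich limit $u_*$ with $\|u_*\|_{L^2}=1$ exists, and $\int_D\Delta\varphi\,|u_*|^2=0$ forces $Z=\{\Delta\varphi=0\}$ to have positive measure (here Fatou with $\Delta\varphi\ge0$ is what you actually need; boundedness of $\Delta\varphi$ on $\overline D$ is not automatic in the paper's intended application, where $\varphi\to+\infty$ at $\partial D$). The case $\operatorname{int}Z\ne\emptyset$ is handled completely and correctly with $\chi e^{-m\bar h}$, but positive measure does not give interior: $\Delta\varphi(z)=\operatorname{dist}(z,C)^\alpha$ with $C$ a fat Cantor set is Hölder, nonnegative, and vanishes precisely on a set with no interior. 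That general case is the entire content of the theorem, and there you only describe the strategy. The decomposition $L_{m\varphi}(\zeta e^{im\psi}u_m)=e^{im\psi}\bigl[-(\partial_z\zeta)u_m-\zeta\,\partial_z u_m+m\rho_z\zeta u_m\bigr]$ is correct, and the first two pieces are under control; but the bound on the corrector $m^2\int|\rho_z|^2\zeta^2|u_m|^2$ is asserted to follow from ``balancing $\varepsilon(m)$ against Schauder control of $\rho$ and Agmon decay of $u_m$'' without any estimate. This is not a small omission: the Schauder constant for the Dirichlet problem on $U_\varepsilon=\{\Delta\varphi<\varepsilon\}$ degenerates with the geometry of $U_\varepsilon$ (which is not controlled as $\varepsilon\to0$), $\rho_z$ is a first derivative so $C^{1,\alpha}$ rather than $C^0$ control is needed, and Agmon decay of $u_m$ into $\{\Delta\varphi\ge\varepsilon\}$ requires a lower bound on the potential there of size larger than $\lambda^0_{m\varphi}/m$, which interacts with the choice of $\varepsilon(m)$. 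You have correctly located where the Hölder hypothesis and the $\liminf$ in the conclusion come from, but as written this is a roadmap, not a proof, and the step you flag as ``the technical heart'' is precisely what remains to be done.
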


Let $A^{2} (\Omega)$ be the space of holomorphic $L^{2}$ functions on $\Omega$ and $\phi\in L^{\infty} (\Omega)$. The Hankel operator $H_{\phi}:A^{2} (\Omega)\rightarrow L^{2} (\Omega)$ with symbol $\phi$ is defined by
$$H_{\phi} f=[\textbf{B}, \phi]f=(\textbf{B}-I)(\phi f).$$
Here $I$ is the identity mapping, $\textbf{B}$ is the Bergman projection. Therefore, when $\phi\in C^{1}(\overline{\Omega})$, $H_{\phi} f=-\bar{\partial}^{*}N_1(f\bar{\partial}\phi)$. 

On bounded pseudoconvex domains in $\mathbb{C}^{n}$, compactness of $\bar{\partial}$-Neumann operator $N_1$ implies compactness of the Hankel operator (see Proposition 2.5). The inverse direction is still open in general. On a smooth bounded pseudoconvex Hartogs domain in $\mathbb{C}^{2}$, it is proved by \c{S}ahuto\u{g}lu and Zeytuncu that compactness of $\bar{\partial}$-Neumann operator $N_1$ and compactness of the Hankel operator are equivalent \cite{SZ}. We also discuss the compactness of Hankel operator and $N_1$ in this note. 

\medskip

Using the ideas developed in \cite{CF} \cite{FS3} \cite{SZ}, we  prove the following result in this note. As pointed out to us by Prof.\c{S}ahuto\u{g}lu, the main result can be generalized to the weighted $L^2$ version using the similar arguments. 

\begin{thm}\label{mj;thm1}
Let $D\subset \mathbb{C}$ be a smooth bounded domain, $\varphi$ be a subharmonic function on $D$. Let $\Omega=\{(z, w_1, w_2,\dots, w_n)\mid\sum_{k=1}^{n} |w_k|^2<e^{-2\varphi(z)}, z\in D\}$ be a smooth pseudoconvex Hartogs domain in $\mathbb{C}^{n+1}$. Then the following statements are equivalent:
\begin{itemize}
\item $\partial\Omega$ satisfies Property (P).
\item $\bar{\partial}$-Neumann operator $\mathit{N_1}$ is compact on $L^2_{(0, 1)}(\Omega)$.
\item The Hankel operator $H_{\phi}$ is compact on $A^2 (\Omega)$ for all $\phi\in C^{\infty}(\overline{\Omega})$.
\end{itemize}
\end{thm}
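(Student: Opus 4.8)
The plan is to prove the theorem by establishing the cycle of implications among the three statements, using Catlin's Theorem \ref{Catlin;thm1} and the Christ--Fu eigenvalue estimate (Theorem \ref{Christ-Fu;thm1}) as the two pillars. The implication ``Property $(P)$ $\Rightarrow$ $N_1$ compact'' is immediate from Theorem \ref{Catlin;thm1}, so nothing new is needed there. The implication ``$N_1$ compact $\Rightarrow$ $H_\phi$ compact for all $\phi\in C^\infty(\overline\Omega)$'' is the general fact recorded as Proposition 2.5 in the excerpt, via the formula $H_\phi f=-\bar\partial^* N_1(f\,\bar\partial\phi)$ together with the compactness of $\bar\partial^*N_1$; again no domain-specific work is needed. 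Thus the entire substance of the theorem lies in closing the cycle: I would prove ``$H_\phi$ compact for all $\phi\in C^\infty(\overline\Omega)$ $\Rightarrow$ $\partial\Omega$ satisfies Property $(P)$'', which is the hard direction and will occupy the bulk of the argument.

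To attack that hard implication I would argue by contrapositive: assume $\partial\Omega$ fails Property $(P)$ and produce a symbol $\phi\in C^\infty(\overline\Omega)$ for which $H_\phi$ is not compact. First I would analyze what the failure of $(P)$ means for a Hartogs domain of this special form. Since $\Omega$ is fibered over $D$ with fibers that are balls of radius $e^{-\varphi(z)}$, the Levi geometry of $\partial\Omega$ is governed by $\varphi$; the ``flat'' part of the boundary (where the Hessian of the defining function degenerates) lies over the set where $\varphi$ is harmonic, i.e. $\Delta\varphi=0$. Following the Christ--Fu circle of ideas, the failure of $(P)$ for $\partial\Omega$ should be equivalent to a uniform boundedness statement for the nonmagnetic Schrödinger eigenvalues, namely $\sup_m \lambda^0_{m\varphi}(D')<\infty$ on some relevant subdomain (or, more precisely, the relevant set-theoretic version: $\varphi$ restricted to some piece of $\overline D$ fails the analogue of $(P)$ for its sublevel/graph set). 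Here I expect to need a careful reduction, probably localizing near a boundary point over which the fiber direction degenerates, and to invoke (or adapt) the characterization from \cite{CF}, \cite{FS3} that ties Property $(P)$ of such Hartogs boundaries to the behavior of $\lambda^0_{m\varphi}$.

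The second and more delicate half of the contrapositive is the passage from the magnetic-eigenvalue bound $\liminf_m \lambda_{m\varphi}(D')<\infty$ (delivered by Theorem \ref{Christ-Fu;thm1} once we know $\sup_m\lambda^0_{m\varphi}<\infty$) to non-compactness of a concrete Hankel operator. The natural candidate symbol is $\phi(z,w)=\bar w_1$ (or a cutoff of one of the anti-holomorphic fiber coordinates), whose $\bar\partial$ is a constant $(0,1)$-form in the fiber direction; then $H_\phi f=(\mathbf B-I)(\bar w_1 f)$ and one computes $H_\phi$ on a test sequence built by separation of variables. Writing $f_m(z,w)=g_m(z)\,w_1^{m}\cdot(\text{normalizing constant})$ with $g_m$ chosen from (near-)minimizers of the magnetic Rayleigh quotient in \eqref{mj;eq1.1}, the weight $e^{2(m+1)\varphi}$ that appears when one integrates out the fiber variables reproduces exactly the weighted $L^2$-norms governing $\lambda_{m\varphi}$. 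The bound $\liminf_m\lambda_{m\varphi}<\infty$ should then force $\|H_\phi f_m\|$ to stay bounded below while $f_m\rightharpoonup 0$ weakly, contradicting compactness. I would model this computation on the Hankel-operator arguments of \c{S}ahuto\u{g}lu--Zeytuncu \cite{SZ} in the $\mathbb C^2$ case, the main new bookkeeping being the several fiber variables $w_1,\dots,w_n$ (so sums of monomials and multi-index normalizations in the ball fiber replace single powers of $w$); I expect the extra dimensions to be only a notational complication, not a conceptual one.

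The main obstacle I anticipate is the first step of the contrapositive: making precise and proving the dictionary between ``$\partial\Omega$ fails Property $(P)$'' and ``$\sup_m\lambda^0_{m\varphi}<\infty$ on a suitable piece of $D$,'' for the $(n+1)$-dimensional Hartogs domain. In the $\mathbb C^2$ case this is essentially in \cite{CF}, but here one must check that adding fiber variables does not introduce genuinely new directions of degeneracy — intuitively it should not, since the ball fibers are strictly pseudoconvex in the $w$-directions, so all the ``flatness'' of $\partial\Omega$ still comes from $\varphi$ over $D$ — but verifying this carefully, including the behavior near the part of $\partial\Omega$ lying over $\partial D$, is where the real work lies. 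Once that equivalence is in hand, Theorem \ref{Christ-Fu;thm1} and the Hankel construction of the previous paragraph close the loop and complete the proof.
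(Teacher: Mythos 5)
Your overall plan matches the paper's architecture closely: Catlin for (P)\,$\Rightarrow$\,compactness of $N_1$, the general fact (Proposition~\ref{mj;pro3}) for $N_1$ compact\,$\Rightarrow$\,$H_\phi$ compact, and then the hard closing step routed through the Schr\"odinger eigenvalues $\lambda^0_{m\varphi}$ and $\lambda_{m\varphi}$, with Theorem~\ref{Christ-Fu;thm1} bridging the magnetic and non-magnetic sides. The paper organizes this as a five-step cycle
$(2)\Rightarrow(3)\Rightarrow(4)\Rightarrow(5)\Rightarrow(1)\Rightarrow(2)$ with $(1)$, $(5)$ being the two eigenvalue blow-up conditions; your contrapositive chain not$(2)$\,$\Rightarrow$\,not$(1)$\,$\Rightarrow$\,not$(5)$\,$\Rightarrow$\,not$(4)$ is logically the same thing, and your identification of the degenerate set with $\{\Delta\varphi=0\}$ and the dictionary between Property~$(P)$ of that set and $\lambda^0_{m\varphi}(D)\to\infty$ is exactly Lemmas~\ref{mj;lem1}, \ref{mj;lem2} and step $(1)\Rightarrow(2)$ in the paper.

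The one concrete gap is your choice of symbol in the Hankel step. You propose $\phi(z,w)=\bar w_1$ (or a cutoff of a fiber coordinate). That probes the $w$-directions, which are the \emph{strictly} pseudoconvex directions of $\partial\Omega$ and carry none of the degeneracy; multiplication by $\bar w_1$ merely shifts the $w_1$-degree and never engages the weighted $\bar\partial$-problem in $z$ whose inverse norm is what $\lambda_{m\varphi}(D)^{-1/2}$ measures. The paper (following \cite{SZ}) instead takes a symbol $\phi\in C^\infty(\overline\Omega)$ depending only on $z$, with $\partial\phi/\partial\bar z=\beta\in C_0^\infty(D)$, tests it against $h=w_1^m$, and uses the compactness estimate of Lemma~\ref{mj;lem3} together with the $W^{-1}$-bound (Lemmas~\ref{mj;lem5}, \ref{mj;lem6}) to kill the $\|\cdot\|_{-1}$ term as $m\to\infty$; integrating out the ball fiber turns $\|\phi w_1^m\|^2_{L^2(\Omega)}$ into $\int_D|\phi|^2e^{-2(n+m)\varphi}$, and a duality argument over all $\beta$ then reproduces the magnetic Rayleigh quotient \eqref{mj;eq1.1} at level $m+n$. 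So replace your $\bar w_1$ by a base symbol $\phi(z)$; with that correction the rest of your outline is the paper's proof in contrapositive form, and the ``extra $w$-variables are only bookkeeping'' instinct is correct (the fiber integral contributes the binomial factor $(e^{-2\varphi}-|w_1|^2)^{n-1}$, which shifts $m$ to $m+n$ but changes nothing qualitative).
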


\section{Preliminaries}

Using the result of Sibony(\cite{Sib}, see \cite{F} as well), there are some equivalent descriptions of Property (P) in one-dimensional case 
(see Proposition 5 in \cite{FS3}). 

\begin{pro}\label{mj;pro1}
Let $\mathit{K}$ be a compact subset of $\mathbb{C}$. The following statements are equivalent:\\
(1) $\mathit{K}$ satisfies Property (P).\\
(2) $\mathit{K}$ has empty fine interior.\\
(3) $\mathit{K}$ supports no zero function in $\mathit{W}_{0}^{1}(\mathbb{C})$\\
(4) For any sequence of open sets $\{\mathit{U}_j\}_{j=1}^{\infty}$ such that $\mathit{K}\subset\subset\mathit{U}_{j+1}\subset\subset\mathit{U}_{j}$ and $\cap_{j=1}^{\infty}\overline{\mathit{U}_{j}}=\mathit{K}$, $\lambda(\mathit{U}_{j})\rightarrow\infty$ as $j\rightarrow\infty$. $\lambda(\mathit{U}_{j})$ denotes the first eigenvalue of the Dirichlet problem for the Laplacian on $\mathit{U}_{j}$, i.e. $\lambda(\mathit{U}_{j})=\inf\{\int |\nabla u|^2/\int |u|^2; u\in\mathit{C}_{0}^{\infty}(\mathit{U}_{j})\}$
\end{pro}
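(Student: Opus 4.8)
The plan is to establish the chain of equivalences $(1)\Leftrightarrow(2)\Leftrightarrow(3)\Leftrightarrow(4)$, proving $(3)\Leftrightarrow(4)$ by an elementary compactness argument and reducing the two potential-theoretic links to Sibony's work \cite{Sib} (see also \cite{F}; this proposition is recorded in \cite{FS3}). First I would unwind Property $(P)$ for a compact $K\subset\mathbb{C}$ with $q=1$: the Hermitian form $(\partial^2\psi/\partial z\,\partial\bar z)$ is a single scalar, so $(1)$ asserts that for every $M>0$ there is a neighborhood $U$ of $K$ and $\psi\in C^2(U)$ with $0\le\psi\le 1$ and $\tfrac14\Delta\psi\ge M$ on $K$ (I write $\psi$ to avoid clashing with the eigenvalue $\lambda(U_j)$ of $(4)$); after shrinking $U$ we may assume $K\subset\subset U$, and by continuity of $\Delta\psi$ that $\Delta\psi\ge M$ throughout $U$.

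For $(3)\Rightarrow(4)$ I would argue contrapositively: if some admissible sequence $\{U_j\}$ has $\liminf_j\lambda(U_j)<\infty$, pass to a subsequence with $\lambda(U_{j_k})\le C$ and pick $u_k\in C_0^\infty(U_{j_k})$ with $\|u_k\|_{L^2}=1$ and $\|\nabla u_k\|_{L^2}^2\le C+1$. The $u_k$ are bounded in $W^1(\mathbb{C})$ and all supported in the fixed bounded set $\overline{U_{j_1}}$, so Rellich's theorem yields a subsequence converging in $L^2$, and weakly in $W^1$, to some $u$ with $\|u\|_{L^2}=1$. Since $\bigcap_j\overline{U_j}=K$, every $x\notin K$ has a neighborhood disjoint from $U_{j_k}$ for large $k$, so $u=0$ a.e.\ off $K$; thus $u$ is a nonzero element of $W^1(\mathbb{C})=W_0^1(\mathbb{C})$ supported in $K$, contradicting $(3)$. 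Conversely $(4)\Rightarrow(3)$: if $0\ne u\in W_0^1(\mathbb{C})$ is supported in $K$, then since $\operatorname{supp}u\subseteq K\subset\subset U_j$ the mollifications $u\ast\rho_\varepsilon$ lie in $C_0^\infty(U_j)$ for small $\varepsilon$ and converge to $u$ in $W^1$, giving $\lambda(U_j)\le\|\nabla u\|_{L^2}^2/\|u\|_{L^2}^2$ for all $j$, so $(4)$ fails.

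For $(2)\Leftrightarrow(3)$ I would invoke classical potential theory: a function in $W_0^1(\mathbb{C})$ has a quasicontinuous representative, unique up to sets of capacity zero and finely continuous off such a set, and if it vanishes a.e.\ on the open set $\mathbb{C}\setminus K$ it vanishes quasi-everywhere outside the fine interior $\mathrm{int}_f K$. Hence a nonzero $u\in W_0^1(\mathbb{C})$ supported in $K$ forces $\mathrm{int}_f K\neq\varnothing$; conversely a nonempty fine interior is finely open of positive capacity and carries a nonzero $W_0^1(\mathbb{C})$ function vanishing q.e.\ outside $K$ (obtained, e.g., from the fine Dirichlet problem). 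For $(1)\Leftrightarrow(2)$ I would cite Sibony's characterization of Property $(P)$ of compact subsets of $\mathbb{C}$ \cite{Sib}, \cite{F}: if $\mathrm{int}_f K\neq\varnothing$ the constraint $0\le\psi\le1$ is incompatible with a uniformly large Laplacian on that finely open piece, so $(1)$ fails; and if $\mathrm{int}_f K=\varnothing$, Sibony produces for each $M$ the required bounded $C^2$ function with $\Delta\psi\ge M$ on $K$ by suitably regularizing a logarithmic potential.

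The routine part is $(3)\Leftrightarrow(4)$; the genuine content, and the step I expect to be the main obstacle toward a self-contained account, is $(1)\Leftrightarrow(2)$ — in particular manufacturing the \emph{bounded} plurisubharmonic witnesses for Property $(P)$ out of the emptiness of the fine interior. The naive shortcut — integrating by parts $\int\Delta\psi\,|u|^2$ against a Property $(P)$ function to preclude a nonzero $u\in W_0^1$ supported in $K$ — does not close, since it only yields $M\|u\|_{L^2}\lesssim\|\nabla\psi\|_{L^\infty}\|\nabla u\|_{L^2}$ while $\|\nabla\psi\|_{L^\infty}$ necessarily grows with $M$; taming exactly this trade-off is the point of Sibony's construction. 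Accordingly I would cite \cite{Sib}, \cite{F}, \cite{FS3} for that equivalence and assemble the proposition from the chain above.
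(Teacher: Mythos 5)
The paper does not actually prove this proposition: it states it as a citation, pointing to Proposition~5 of \cite{FS3} (which rests on Sibony \cite{Sib} and Fuglede \cite{F}) and to Proposition~4.17 of \cite{Str2}, so there is no in-paper argument to compare against. Your sketch is consistent with those sources: the $(3)\Leftrightarrow(4)$ step via Rellich and mollification is correct and self-contained, and you correctly identify $(1)\Leftrightarrow(2)$ (Sibony's construction of bounded functions with large Laplacian from emptiness of the fine interior) and $(2)\Leftrightarrow(3)$ (quasicontinuity/fine-topology arguments) as the parts that genuinely require the cited potential theory rather than the naive integration by parts — which is exactly the division of labor the paper itself adopts by deferring to \cite{FS3}, \cite{Sib}, \cite{F}, \cite{Str2}.
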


On a subset $U\subset \mathbb{C}$, $z\in U$ is called a fine interior point of $U$ if $\lim_{r\rightarrow 0}(\sigma(\partial B(z,r)\cap U)/2\pi r)=1$, where $\sigma$ denotes the length of arcs. Readers can find the proof in Proposition 4.17 in \cite{Str2}. 

In high-dimensional case, following Sibony's arugments of Property (P), some necessary ingredients of the proof is described as follows (more details can be found in \cite{FS2} or Proposition 4.10 on P.88 in \cite{Str2}). $U$ is an open set of $\mathbb{C}^{n}$ and $P(U)$ denotes the set of all continuous plurisubharmonic functions on $U$. $K$ is a compact subset of $\mathbb{C}^n$ and $P(K)$ denotes the closure in the algebra of continuous functions on $K$ that belong to $P(V)$ for an open neighborhood $V$ of $K$ ($V$ is allowed to depend on the function). 

\begin{pro}\label{mj;pro2}
Let $K$ be a compact subset of $\mathbb{C}^n$, then the following statements are equivalent.\\
(1) $K$ satisfies Property (P).\\
(2) $-|z|^2\in P(K)$.\\
(3) $P(X)=C(X).$
\end{pro}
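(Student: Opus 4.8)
The plan is to settle the formal implications $(3)\Rightarrow(2)$ and $(1)\Leftrightarrow(2)$ quickly and then put all the effort into $(2)\Rightarrow(3)$. The implication $(3)\Rightarrow(2)$ is immediate since $-|z|^2\in C(K)=P(K)$. For $(1)\Leftrightarrow(2)$ I would just rescale $\lambda$ against $|z|^2$. Given $\varepsilon>0$, apply Property $(P)$ with constant $1/\varepsilon+1$ to obtain a neighborhood $U$ of $K$ and $\lambda\in C^2(U)$, $0\le\lambda\le1$, whose complex Hessian has smallest eigenvalue $\ge1/\varepsilon+1$ on $K$, hence $\ge1/\varepsilon$ on a smaller neighborhood $U'$ by continuity of the Hessian entries; then $\psi:=\varepsilon\lambda-|z|^2$ is continuous plurisubharmonic on $U'$ with $\sup_K|\psi+|z|^2|=\varepsilon\sup_K\lambda\le\varepsilon$, so $-|z|^2\in P(K)$. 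Conversely, given $M>0$, take continuous plurisubharmonic $\psi$ near $K$ with $\sup_K|\psi+|z|^2|$ small, mollify it to make it $C^\infty$ on a slightly smaller set (this preserves plurisubharmonicity and converges locally uniformly), and check that, after shrinking to a neighborhood $V$ of $K$, $\lambda:=M(\psi+|z|^2)+\tfrac12$ satisfies $0\le\lambda\le1$ on $V$ and has complex Hessian equal to $M$ times that of $\psi+|z|^2$, whose eigenvalues are $\ge1$; this is an admissible function for Property $(P)$.

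For $(2)\Rightarrow(3)$ I would build a \emph{plurisubharmonic envelope from below}. First record the relevant structure of $P(K)$: it is a uniformly closed convex cone, stable under finite maxima, containing the constants and all pluriharmonic functions. The use of $(2)$ is the following: for every $\varepsilon'>0$ there are a neighborhood $U$ of $K$ and $g_0\in P(U)$ with $\sup_K|g_0+|z|^2|<\varepsilon'$; then the translates $g_a:=g_0+2\operatorname{Re}\langle z,a\rangle-|a|^2$ again lie in $P(U)$ (we added a pluriharmonic function and a constant) and approximate $-|z-a|^2$ within $\varepsilon'$ on $K$, \emph{uniformly} in $a\in K$. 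Given $f\in C(K)$ and $\varepsilon>0$, I would then form
\begin{equation*}
v(z):=\sup_{a\in K}\bigl[\,f(a)+R\bigl(g_a(z)-g_a(a)\bigr)\,\bigr]
\end{equation*}
for a large $R$ to be chosen. Each member of this family is plurisubharmonic on $U$, and the family is jointly continuous in $(z,a)\in U\times K$ with $K$ compact, so $v$ is continuous on $U$; feeding a maximizing $a$ into the sub-mean-value inequality on complex disks shows $v$ is plurisubharmonic on $U$, hence $v|_K\in P(K)$. Taking $a=z$ gives $v\ge f$ on $K$.

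For the matching upper bound I would order the parameters: pick $\delta>0$ with $|f(z)-f(a)|<\varepsilon/3$ whenever $|z-a|<\delta$; then $R$ with $R\delta^2>2\sup_K|f|+\varepsilon$; and finally $\varepsilon'<\varepsilon/(6R)$. With these choices $g_a(z)-g_a(a)<-|z-a|^2+2\varepsilon'$ on $K\times K$, so the bracket defining $v$ is $<f(a)-R|z-a|^2+\varepsilon/3$, which is $<f(z)+\varepsilon$ when $|z-a|<\delta$ and is $<-\sup_K|f|\le f(z)$ otherwise. Hence $f\le v\le f+\varepsilon$ on $K$; since $\varepsilon$ is arbitrary and $P(K)$ is closed, $f\in P(K)$, i.e. $C(K)=P(K)$.

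The hard part is exactly $(2)\Rightarrow(3)$: one must (a) verify that the upper envelope of the jointly continuous plurisubharmonic family is plurisubharmonic — this is where the sub-mean-value characterization and the compactness of the parameter set $K$ are essential — and (b) interlock the choices of $\delta$, $R$, $\varepsilon'$ so that a single plurisubharmonic approximant $g_0$ to $-|z|^2$ already dominates and is dominated by the arbitrary continuous function $f$. This is in essence Sibony's characterization of $B$-regular compacta; the argument and its potential-theoretic background may be found in \cite{Sib}, \cite{FS2}, and Proposition 4.10 of \cite{Str2}, and in dimension one it is consistent with Proposition \ref{mj;pro1}.
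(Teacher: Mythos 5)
Your proof is correct. Note, however, that the paper does not give its own proof of this proposition: it is stated as a known ingredient of Sibony's theory with citations to \cite{FS2} and Proposition 4.10 of \cite{Str2}, so there is no in-paper argument to compare against. Your reconstruction is precisely the standard Sibony-type argument found in those references: $(1)\Leftrightarrow(2)$ by rescaling a bounded $C^2$ potential against $|z|^2$ (with the mollification step to pass from continuous to $C^2$ plurisubharmonic approximants, and the tolerance on $\sup_K|\psi+|z|^2|$ chosen smaller than $1/(2M)$ so that $0\le\lambda\le 1$ on a neighborhood), and $(2)\Rightarrow(3)$ via the continuous plurisubharmonic upper envelope of the pluriharmonic translates $g_a$, whose plurisubharmonicity follows from the sub-mean-value inequality applied at a maximizing $a$ together with continuity of the envelope from joint continuity of $F(z,a)$ and compactness of $K$. (Also, the statement's ``$P(X)=C(X)$'' with $X$ undefined is evidently a typo for $P(K)=C(K)$, which you have read correctly.)
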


To establish the connection from the compactness of Hankel operator to the compactness of $\bar{\partial}$-Neumann operator, we are using the following estimate proved in Lemma 3 of \cite{SZ}. For completeness, we include the proof here.

\begin{lem}\label{mj;lem3}
Let $\Omega$ be a bounded pseudoconvex domain in $C^{n}$ and $\phi\in C^{1}(\bar{\Omega})$. If the Hankel operator $H_{\phi}$ is compact on $A^{2}(\Omega)$, then for any $\epsilon>0$, there exists $C_\epsilon>0$, such that 
\begin{equation}\label{mj;eq2}
\|H_\phi h\|^{2}\leq\epsilon\|h\bar{\partial}\phi\|\|h\|+C_{\epsilon}\|h\bar{\partial}\phi\|_{-1}\|h\|
\end{equation}
for any $h\in A^{2}(\Omega)$.
\end{lem}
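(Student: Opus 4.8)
The plan is to argue by contradiction, using that a bounded operator between Hilbert spaces is compact precisely when it carries weakly convergent sequences to norm-convergent ones, together with the two facts recorded before the lemma: for $\phi\in C^{1}(\overline\Omega)$ and $h\in A^{2}(\Omega)$ one has $H_{\phi}h=-\bar{\partial}^{*}N_{1}(h\bar{\partial}\phi)$, and $H_{\phi}h=(\textbf{B}-I)(\phi h)$ is orthogonal to $A^{2}(\Omega)$. The latter also gives the trivial bound $\|H_{\phi}h\|\le\|\phi\|_{L^{\infty}(\Omega)}\|h\|$, so $H_{\phi}$ is bounded.

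Suppose \eqref{mj;eq2} fails. Then there are $\epsilon_{0}>0$ and functions $h_{n}\in A^{2}(\Omega)$, which I normalize to $\|h_{n}\|=1$, with
\begin{equation*}
\|H_{\phi}h_{n}\|^{2}>\epsilon_{0}\,\|h_{n}\bar{\partial}\phi\|+n\,\|h_{n}\bar{\partial}\phi\|_{-1}.
\end{equation*}
Since the left side is bounded by $\|\phi\|_{L^{\infty}(\Omega)}^{2}$, the coefficient $n$ forces $\|h_{n}\bar{\partial}\phi\|_{-1}\to0$. Passing to a subsequence, $h_{n}\rightharpoonup h$ weakly in $A^{2}(\Omega)$; since point evaluations are bounded on $A^{2}(\Omega)$ and the $h_{n}$ are locally uniformly bounded, $h_{n}\to h$ locally uniformly on $\Omega$, hence $h_{n}\bar{\partial}\phi\to h\bar{\partial}\phi$ in $\mathcal{D}'(\Omega)$. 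But $h_{n}\bar{\partial}\phi\to0$ in $W^{-1}(\Omega)$, hence in $\mathcal{D}'(\Omega)$, so uniqueness of distributional limits forces $h\bar{\partial}\phi\equiv0$ on $\Omega$.

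Consequently $\bar{\partial}(H_{\phi}h)=-h\bar{\partial}\phi=0$, so $H_{\phi}h$ is holomorphic; being also orthogonal to $A^{2}(\Omega)$, it vanishes, and by compactness of $H_{\phi}$ we get $H_{\phi}h_{n}\to H_{\phi}h=0$ in $L^{2}(\Omega)$. The displayed inequality then gives $\epsilon_{0}\|h_{n}\bar{\partial}\phi\|<\|H_{\phi}h_{n}\|^{2}\to0$, i.e.\ $\|h_{n}\bar{\partial}\phi\|\to0$. On the other hand, writing $u_{n}=h_{n}\bar{\partial}\phi$ and using $H_{\phi}h_{n}=-\bar{\partial}^{*}N_{1}u_{n}$ with $\langle\Box_{1}N_{1}u_{n},N_{1}u_{n}\rangle=\|\bar{\partial}^{*}N_{1}u_{n}\|^{2}+\|\bar{\partial}N_{1}u_{n}\|^{2}$, one obtains $\|H_{\phi}h_{n}\|^{2}\le\langle u_{n},N_{1}u_{n}\rangle\le\|N_{1}\|\,\|u_{n}\|^{2}$. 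Combined with the lower bound this yields $\|h_{n}\bar{\partial}\phi\|>\epsilon_{0}/\|N_{1}\|$ for every $n$ (when $u_{n}\equiv0$ the displayed inequality is already impossible), contradicting $\|h_{n}\bar{\partial}\phi\|\to0$.

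The crux is that the failure of \eqref{mj;eq2} has to be exploited twice simultaneously: the factor $n$ forces $\|h_{n}\bar{\partial}\phi\|_{-1}\to0$, which in the weak limit gives $h\bar{\partial}\phi=0$ and hence $H_{\phi}h_{n}\to0$; and the term $\epsilon_{0}\|h_{n}\bar{\partial}\phi\|$ is then played off against the quadratic bound $\|H_{\phi}h_{n}\|^{2}\le\|N_{1}\|\,\|h_{n}\bar{\partial}\phi\|^{2}$ coming from boundedness of $N_{1}$ — it is the mismatch between a linear lower bound and a quadratic upper bound in $\|h_{n}\bar{\partial}\phi\|$ that produces the contradiction. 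A minor technical point is that, since $\phi$ is only $C^{1}$, multiplication by $\bar{\partial}\phi$ need not be bounded on $W^{-1}(\Omega)$, so the identification $h\bar{\partial}\phi=0$ must be carried out at the level of distributions, as above, rather than in the negative Sobolev norm.
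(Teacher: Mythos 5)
Your proof is correct, but it proceeds by a genuinely different route from the paper's. The paper gives a direct estimate: starting from $\|H_\phi h\|^2=\langle H_\phi^* H_\phi h,h\rangle\le\|H_\phi^*H_\phi h\|\|h\|$, it splits $H_\phi^*$ (compact since $H_\phi$ is) into a small piece plus a compact operator $K_\epsilon$, observes that $K_\epsilon\bar\partial^* N_1$ is compact on $L^2$, and applies the compactness estimate coming from the Rellich embedding $L^2\hookrightarrow W^{-1}$ to produce the $C_\epsilon\|h\bar\partial\phi\|_{-1}$ term directly. You instead argue by contradiction: normalizing $\|h_n\|=1$, the factor $n$ forces $\|h_n\bar\partial\phi\|_{-1}\to0$; you extract a weak $A^2$-limit $h$, use local uniform convergence (Bergman/Montel) to identify the distributional limit of $h_n\bar\partial\phi$ as $h\bar\partial\phi$, conclude $h\bar\partial\phi=0$ and hence $H_\phi h=0$; compactness of $H_\phi$ (as a map of weak to strong convergence) then forces $H_\phi h_n\to0$, and playing the resulting linear lower bound $\epsilon_0\|h_n\bar\partial\phi\|<\|H_\phi h_n\|^2$ against the quadratic upper bound $\|H_\phi h_n\|^2\le\|N_1\|\,\|h_n\bar\partial\phi\|^2$ yields the contradiction $\|h_n\bar\partial\phi\|>\epsilon_0/\|N_1\|$. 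Both arguments hinge on the same two ingredients — compactness of $H_\phi$ and the bound $\|\bar\partial^*N_1 u\|^2\le\|N_1\|\|u\|^2$ — but the paper's version is constructive (in principle one could track $C_\epsilon$ through the compactness estimates), while yours is nonconstructive but cleanly isolates the mechanism: the $W^{-1}$ term exists precisely to pin the weak limit of $h_n\bar\partial\phi$ to zero, after which sequential compactness does the rest. A small simplification in your write-up: once $h\bar\partial\phi=0$ you can conclude $H_\phi h=-\bar\partial^*N_1(h\bar\partial\phi)=0$ immediately, without the detour through holomorphy and orthogonality to $A^2$.
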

\begin{proof}

Since $\Omega$ is bounded and pseudoconvex, for any $u\in L^{2}(\Omega)$, 
\begin{equation*}
\begin{split}
\|\bar{\partial}N u\|^2+\|\bar{\partial}^{*}N u\|^2 =\langle u,N u\rangle\leq\|u\|\|N u\|\leq C\|u\|^{2},
\end{split}
\end{equation*}
where $C$ just depends on $\Omega$. When $h\in A^{2}(\Omega)$, 
\begin{equation*}
\|H_\phi h\|^{2}=\langle H_{\phi}^{*}H_{\phi} h,h\rangle\leq\|H_{\phi}^{*}H_{\phi} h\|\|h\|.
\end{equation*}

Compactness of Hankel operator is equivalent to compactness of its adjoint operator $H_{\phi}^{*}$. By compactness estimate of $H_{\phi}^{*}$, (see for example), for any $\epsilon>0$, there exists a compact operator $K_\epsilon$, so that
\begin{equation*}
\|H_{\phi}^{*}H_{\phi} h\|\leq\frac{\epsilon}{2C}\|H_{\phi} h\|+\|K_{\epsilon} H_{\phi} h\|=\frac{\epsilon}{2C}\|\bar{\partial}^{*}N(h\bar{\partial}\phi)\|+\|K_{\epsilon} \bar{\partial}^{*}N(h\bar{\partial}\phi)\|.
\end{equation*}
Also since $\bar{\partial}^{*}N$ is bounded, $K_{\epsilon}\bar{\partial}^{*}N$ is compact. And by Rellich's Lemma, for that $\epsilon$, there exists $C_{\epsilon}>0$, such that
\begin{equation*}
\|K_{\epsilon} \bar{\partial}^{*}N(h\bar{\partial}\phi)\|\leq\frac{\epsilon}{2}\|h\bar{\partial}\phi\|+C_{\epsilon}\|h\bar{\partial}\phi\|_{-1}.
\end{equation*}
Overall, the estimate ($\ref{mj;eq2}$) is achieved.

\end{proof}
\begin{rem}
The converse is also true and readers can find the proof in \cite{SZ}.
\end{rem}

It is well known that compactness of $N_q$ implies compactness of canonical solution operators $\bar{\partial}^{*}N_q$ and $\bar{\partial}^{*}N_{q+1}$. Moreover, we have the following conclusion(see Remark 1 in \cite{CeSa}).

\begin{pro}\label{mj;pro3}
Let $\Omega$ be a bounded pseudoconvex domain in $\mathbb{C}^{n}$. Then the Hankel operator $H_{\phi}$ is compact on $A^2 (\Omega)$ for all $\phi\in C^{\infty}(\overline{\Omega})$ when $N_1$ is compact on $L_{(0,1)}^{2} (\Omega)$. 
\end{pro}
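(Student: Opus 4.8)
The plan is to prove the three conditions equivalent by running the cycle $(P)\Rightarrow(N)\Rightarrow(H)\Rightarrow(P)$, where $(P),(N),(H)$ denote the three bulleted statements. The implication $(P)\Rightarrow(N)$ is exactly Catlin's Theorem~\ref{Catlin;thm1} with $q=1$, and $(N)\Rightarrow(H)$ is precisely Proposition~\ref{mj;pro3}; so the entire content is the implication $(H)\Rightarrow(P)$, which I will prove in contrapositive form. Assuming $\partial\Omega$ does not satisfy Property~$(P)$, I will exhibit a single smooth symbol — concretely $\phi=\bar z$ — for which $H_\phi$ is not compact, which by Lemma~\ref{mj;lem3} amounts to violating the estimate~\eqref{mj;eq2}.

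First I would transfer the failure of Property~$(P)$ from $\partial\Omega\subset\mathbb{C}^{n+1}$ to the base $D$. The obstruction to Property~$(P)$ comes from analytic structure in the ``top'' $\{\sum_k|w_k|^2=e^{-2\varphi(z)}\}$ of $\partial\Omega$, which over an open $V\subset D$ is present exactly when $\varphi$ is harmonic on $V$ (then $z\mapsto(z,e^{-g(z)},0,\dots,0)$ with $\mathrm{Re}\,g=\varphi$ is an analytic disc in $\partial\Omega$); using the circular symmetry in $w$ and the Sibony-type criteria of Propositions~\ref{mj;pro1} and~\ref{mj;pro2}, the failure of $(P)$ for $\partial\Omega$ should reduce to the failure of the one-variable Property~$(P)$ for the compact set $K=\overline{\{z\in D:\Delta\varphi(z)=0\}}$. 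By Proposition~\ref{mj;pro1}(3), $K$ then supports a nonzero $u_0\in W_0^1(\mathbb{C})$; approximating $u_0$ by smooth functions concentrated in shrinking neighborhoods of $K$ and using that $\Delta\varphi$ is continuous and vanishes on $K$, one produces $v_m\in C_0^\infty(D)$ with $\int|\nabla v_m|^2$ bounded and $m\int\Delta\varphi\,|v_m|^2\to0$, hence $\sup_m\lambda_{m\varphi}^0(D)<\infty$.

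Now Christ--Fu's Theorem~\ref{Christ-Fu;thm1} applies — its Hölder hypothesis on $\Delta\varphi$ holds because $\varphi$ is smooth on $D$ (as $\partial\Omega$ is smooth) — and gives $\liminf_{m\to\infty}\lambda_{m\varphi}(D)<\infty$, so along a subsequence $m_k\to\infty$ the magnetic eigenvalue stays bounded. I would then pass to the circular-symmetry decomposition of $A^2(\Omega)$: expanding in powers of $w_1$, with $\int_{\{|w|<R\}}|w_1|^{2m}\,dV(w)=c_{n,m}R^{2(m+n)}$, the degree-$m$ summand is $w_1^m\cdot A^2\!\big(D,\,c_{n,m}e^{-2(m+n)\varphi}\big)$, and the boundedness of $\lambda_{m_k\varphi}(D)$ translates, via the dictionary of \cite{CF} and \cite{SZ} between the magnetic eigenvalue and the weighted $\bar\partial$-Neumann operator on $D$, into normalized holomorphic functions $h_k\in A^2\!\big(D,c_{n,m_k}e^{-2(m_k+n)\varphi}\big)$ with $\|(I-\textbf{B}_{m_k})(\bar z\,h_k)\|\ge\delta>0$, where $\textbf{B}_{m_k}$ is the corresponding weighted Bergman projection. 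Put $\tilde h_k(z,w)=h_k(z)\,w_1^{m_k}$, a normalized element of $A^2(\Omega)$. Since multiplication by $\bar z$ and the Bergman projection $\textbf{B}$ of $\Omega$ both commute with the torus action $(w_1,\dots,w_n)\mapsto(e^{i\sigma_1}w_1,\dots,e^{i\sigma_n}w_n)$, one gets $\textbf{B}(\bar z h_k w_1^{m_k})=\big(\textbf{B}_{m_k}(\bar z h_k)\big)w_1^{m_k}$, whence $H_{\bar z}\tilde h_k=\big((I-\textbf{B}_{m_k})(\bar z h_k)\big)w_1^{m_k}$ and $\|H_{\bar z}\tilde h_k\|\ge\delta'>0$. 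On the other hand $\|\tilde h_k\,\bar\partial\bar z\|=\|\tilde h_k\|=1$ since $|\bar\partial\bar z|\equiv1$, while $\|\tilde h_k\,\bar\partial\bar z\|_{-1}\to0$: the form $h_k w_1^{m_k}\,d\bar z$ lies in the character $e^{im_k\sigma_1}$ of the $w_1$-rotation, and at fixed $L^2$-norm such high characters have $W^{-1}$-norm $O(1/m_k)$. Feeding this into~\eqref{mj;eq2} gives $\delta'^2\le\|H_{\bar z}\tilde h_k\|^2\le\epsilon+C_\epsilon\|\tilde h_k\bar\partial\bar z\|_{-1}\to\epsilon$ as $k\to\infty$ for every $\epsilon>0$; letting $\epsilon\to0$ is a contradiction, so $H_{\bar z}$ is not compact and $(H)\Rightarrow(P)$ follows.

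The main obstacle, I expect, is twofold. First, the reduction ``$\partial\Omega$ fails $(P)$ $\iff$ $K=\overline{\{\Delta\varphi=0\}}$ fails one-variable $(P)$'' must be made rigorous: one has to check that the behavior of $\varphi$ near $\partial D$ and the degenerate $w$-directions contribute nothing to the failure of Property~$(P)$, so that everything is governed by the harmonic locus of $\varphi$ over $D$ — this is where the circular symmetry and the criteria of Propositions~\ref{mj;pro1} and~\ref{mj;pro2} carry the weight. Second, the ``dictionary'' step — converting a bounded magnetic Schrödinger eigenvalue $\lambda_{m_k\varphi}(D)$ into an honest normalized \emph{holomorphic} $h_k$ on $D$ with $\bar z h_k$ uniformly far from the weighted Bergman space, together with the bookkeeping of the constants $c_{n,m}$ and the control of $\|\tilde h_k\bar\partial\bar z\|_{-1}$ — is precisely where the constructions of \cite{CF} and \cite{SZ} must be adapted to the presence of the extra variables $w_2,\dots,w_n$; those variables themselves enter only through the elementary identity $\int_{\{|w|<R\}}|w_1|^{2m}\,dV(w)=c_{n,m}R^{2(m+n)}$. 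The deep input throughout remains Christ--Fu's Theorem~\ref{Christ-Fu;thm1}, which supplies the one-directional bridge from the nonmagnetic, potential-theoretic side (Property~$(P)$) to the magnetic side governing $N_1$ and the Hankel operators.
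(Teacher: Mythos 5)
The statement you were asked to prove is Proposition~\ref{mj;pro3} itself: on an arbitrary bounded pseudoconvex domain, compactness of $N_1$ on $L^2_{(0,1)}(\Omega)$ implies compactness of $H_\phi$ on $A^2(\Omega)$ for every $\phi\in C^\infty(\overline\Omega)$. Your proposal never proves this. In your cycle $(P)\Rightarrow(N)\Rightarrow(H)\Rightarrow(P)$ you dispose of the step $(N)\Rightarrow(H)$ by declaring it to be ``precisely Proposition~\ref{mj;pro3}'', and then devote the entire argument to $(H)\Rightarrow(P)$ for the Hartogs domain --- which is part of the proof of Theorem~\ref{mj;thm1} in Section~3, not of this proposition. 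With respect to the statement at hand the proposal is therefore circular: it assumes exactly what is to be shown, and everything you actually argue (reduction of the failure of Property $(P)$ to the harmonic locus of $\varphi$, Theorem~\ref{Christ-Fu;thm1}, the symbol $\bar z$, the $W^{-1}$ decay of high rotation characters) serves a different implication. Note also that those arguments exploit the special Hartogs structure, whereas the proposition is stated for a general bounded pseudoconvex domain in $\mathbb{C}^n$, so no such structure is available.

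What is actually needed is short and elementary by comparison, and this is what the paper does. Since $\phi\in C^1(\overline\Omega)$ one has $H_\phi f=-\bar\partial^{*}N_1(f\bar\partial\phi)$ for $f\in A^2(\Omega)$, and the multiplication $f\mapsto f\bar\partial\phi$ is bounded from $A^2(\Omega)$ into $L^2_{(0,1)}(\Omega)$, so it suffices to show that compactness of $N_1$ forces compactness of the canonical solution operator $\bar\partial^{*}N_1$. The paper uses the identity
\[
N_1=\bar\partial^{*}N_2\,(\bar\partial^{*}N_2)^{*}+(\bar\partial^{*}N_1)^{*}\,\bar\partial^{*}N_1 ,
\]
valid on $L^2_{(0,1)}(\Omega)$: both summands are positive operators, so compactness of $N_1$ forces each summand to be compact, and compactness of $(\bar\partial^{*}N_1)^{*}\bar\partial^{*}N_1$ gives compactness of $\bar\partial^{*}N_1$ (since $\|\bar\partial^{*}N_1u\|^{2}=\langle(\bar\partial^{*}N_1)^{*}\bar\partial^{*}N_1u,u\rangle$); likewise one gets compactness of $\bar\partial^{*}N_2$. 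Composing with the bounded multiplication operator yields compactness of $H_\phi$. If you wish to keep your global plan for Theorem~\ref{mj;thm1}, you must still supply this argument (or an equivalent one) for the link $(N)\Rightarrow(H)$.
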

\begin{proof}
For any $f\in C_{(0,1)}^{\infty}(\Omega)$,
\begin{equation*}
\begin{split}
N_1 &=N_1(\bar{\partial}^{*}\bar{\partial}+\bar{\partial}\bar{\partial}^{*})N_1=\bar{\partial}^{*}N_2(N_2\bar{\partial})+(N_1\bar{\partial})\bar{\partial}^{*}N_1\\
&=\bar{\partial}^{*}N_2(\bar{\partial}^{*}N_2)^{*}+(\bar{\partial}^{*}N_1)^{*}\bar{\partial}^{*}N_1 .
\end{split}
\end{equation*}

Since $C_{(0,1)}^{\infty}(\Omega)$ is dense in $L_{(0,1)}^{2}(\Omega)$, compactness of $N_1$ on $L_{(0,1)}^{2}(\Omega)$ implies compactness on $C_{(0,1)}^{\infty}(\Omega)$. Since both $\bar{\partial}^{*}N_2(\bar{\partial}^{*}N_2)^{*}$ and $(\bar{\partial}^{*}N_1)^{*}\bar{\partial}^{*}N_1$ are positive, these two operators are compact when $N_1$ is compact. Therefore $\bar{\partial}^{*}N_1$ and $\bar{\partial}^{*}N_2$ are compact on $C_{(0,1)}^{\infty}(\Omega)$ and $C_{(0,2)}^{\infty}(\Omega)$ respectively. Hence for any $\phi\in C^{\infty}(\overline{\Omega})$, $f\in A^{2}(\Omega)$, $H_{\phi} f=-\bar{\partial}^{*}N(f\bar{\partial}\phi)$ is compact.
\end{proof}

We discuss a class of Hartogs domains in $\mathbb{C}^{n+1}$ in this paper and introduce following lemmas.

\begin{lem}\label{mj;lem1}
Let $\Omega=\{(z, w_1, w_2,\dots, w_n)\mid\sum_{k=1}^{n} |w_k|^2<e^{-2\varphi(z)}, z\in D\}$ be a smooth pseudoconvex Hartogs domain in $\mathbb{C}^{n+1}$, where $D$ is a smooth bounded domain in $\mathbb{C}$. $\pi$ denotes the projection from $\partial\Omega$ to $\bar{D}$. Then for any compact subset $K$ in $D$, $\pi^{-1}(K\cap\{z\in D\mid\Delta\varphi(z)=0\})$ is the set of all weakly pseudoconex points in $\pi^{-1}(K)$. 
\end{lem}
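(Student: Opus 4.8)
The plan is to compute the Levi form of the defining function of $\Omega$ explicitly and read off exactly where it degenerates. Since $\Omega$ is a Hartogs domain, a natural global defining function is $\rho(z,w) = \sum_{k=1}^n |w_k|^2 - e^{-2\varphi(z)}$ (after checking this is smooth up to the boundary, which follows from the smoothness hypothesis on $\partial\Omega$ together with $\varphi \in C^2$; one may need to worry about the part of $\partial\Omega$ lying over $\partial D$, but that is handled separately by the smoothness of $D$ and $\varphi$ on $\overline D$). First I would write $\rho = \sum |w_k|^2 - e^{-2\varphi}$ and compute $\partial\bar\partial\rho$. The mixed terms in $w$ give $\sum_k dw_k \wedge d\bar w_k$, which is positive definite in the $w$-directions; the term coming from $-e^{-2\varphi(z)}$ contributes $-\partial\bar\partial e^{-2\varphi} = 2e^{-2\varphi}(\Delta\varphi - 2|\varphi_z|^2)\, dz\wedge d\bar z$ up to the usual factor, plus there are cross terms $dz \wedge d\bar w_k$ and $dw_k \wedge d\bar z$ from differentiating $e^{-2\varphi}$ once in $z$ and the constraint once — wait, no: $\sum|w_k|^2$ has no $z$-dependence and $e^{-2\varphi}$ has no $w$-dependence, so in fact $\partial\bar\partial\rho$ is block diagonal with a $w$-block equal to the identity and a $z$-block equal to (a positive multiple of) $\Delta\varphi \cdot dz\,d\bar z$ once the factor $|\varphi_z|^2$ is absorbed — this is the standard computation showing pseudoconvexity of such Hartogs domains is governed by subharmonicity of $\varphi$.

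Next I would restrict this Hermitian form to the complex tangent space $T^{1,0}_p(\partial\Omega)$ at a boundary point $p = (z,w)$. The key point: at a point $p$ with $w \neq 0$, the complex tangent space is transverse to the $w$-directions in a way that lets the positive-definite $w$-block dominate unless the tangent vector is essentially purely in the $z$-direction; but because $\rho$ depends on $z$ only through $e^{-2\varphi}$, a tangent vector with nonzero $z$-component $a \partial_z$ must have $w$-component determined by $a$ via the tangency condition $\partial\rho(v) = 0$, and plugging back in one finds the Levi form evaluated on $v$ equals a positive multiple of $\Delta\varphi(z)\,|a|^2$. Hence the Levi form at $p$ vanishes on a nonzero tangent vector if and only if $\Delta\varphi(z) = 0$, and when $w = 0$ (so $p$ lies over a point with $\varphi(z) = +\infty$ — actually $w=0$ forces $0 < e^{-2\varphi(z)}$, fine, so $z$ is interior to $D$ with $\varphi(z)$ finite) the tangent space is all of the $w$-directions and the Levi form is positive definite there, so those are strongly pseudoconvex. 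Actually the cleanest formulation: a point $p \in \partial\Omega$ over $z \in D$ is weakly pseudoconvex exactly when $\Delta\varphi(z) = 0$. Intersecting with $\pi^{-1}(K)$ and with $\{z : \Delta\varphi(z) = 0\}$ then gives the claim, and the fact that $\pi^{-1}(K \cap \{\Delta\varphi = 0\})$ is exactly this set (not merely contained in it) follows because $\pi$ is onto $\overline D$ and the fiber over each such $z$ is an entire sphere of weakly pseudoconvex points.

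The main obstacle I anticipate is bookkeeping at the boundary portion of $\partial\Omega$ that projects to $\partial D$: there the defining function $\rho$ above may fail to be a good local defining function (its gradient can vanish or the set $\{z \in D\}$ cuts things off), so one should either note that $K \subset\subset D$ makes $\pi^{-1}(K)$ avoid that part entirely — indeed since $K$ is a compact subset of the open set $D$, $\pi^{-1}(K)$ lies over the interior and the analysis above applies verbatim — or handle it by the smoothness hypothesis. Given the hypothesis "$K$ compact subset of $D$," this difficulty evaporates, so the real content is just the Levi-form computation and its restriction to the complex tangent space, which I would carry out with an explicit choice of basis for $T^{1,0}_p(\partial\Omega)$ adapted to whether $w = 0$ or $w \neq 0$.
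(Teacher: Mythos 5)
Your approach matches the paper's: take $\rho=\sum_{k=1}^n|w_k|^2-e^{-2\varphi(z)}$ as the defining function, compute the complex Hessian (which is block diagonal, with identity in the $w$-block and a multiple of $\Delta\varphi$ minus a $|\varphi_z|^2$-term in the $z$-entry), restrict the Levi form to $T^{1,0}_p(\partial\Omega)$, and observe that the compactness $K\subset\subset D$ together with $\varphi\to+\infty$ at $\partial D$ forces some $w_k\neq 0$ on $\pi^{-1}(K)$, so the degenerate $\{w=0\}$ stratum never intervenes.

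One step in the sketch is phrased too strongly. At a point with $w\neq 0$, the tangency condition $\partial\rho(v)=0$, namely $2e^{-2\varphi}\varphi_z\tau+\sum_k\bar w_k\xi_k=0$, pins down only the inner product $\sum_k\bar w_k\xi_k$ in terms of $\tau$; for $n>1$ there remains an $(n-1)$-parameter family of admissible $\xi$ for each fixed $\tau$, so the Levi form on the tangent space is \emph{not} literally a multiple of $\Delta\varphi(z)|\tau|^2$. What is true — and what the paper makes explicit by splitting into the cases $\varphi_z=0$ and $\varphi_z\neq 0$ — is that after substituting the tangency relation one gets
\[
\Bigl(\sum_k|\xi_k|^2-\frac{|\sum_k\bar w_k\xi_k|^2}{\sum_k|w_k|^2}\Bigr)\;+\;\bigl(\text{positive const.}\bigr)\cdot\Delta\varphi(z)\cdot|\tau|^2\text{-type term},
\]
where the first bracket is nonnegative by Cauchy--Schwarz and vanishes exactly when $\xi$ is proportional to $w$. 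So the Levi form degenerates on a nonzero tangent vector if and only if $\Delta\varphi(z)=0$, and the degenerate direction when $\Delta\varphi=0$ must be exhibited by the proportional choice of $\xi$, not read off from an equality. A final write-up should carry out this decomposition. Two small slips worth fixing: your stated coefficient $2e^{-2\varphi}(\Delta\varphi-2|\varphi_z|^2)$ for $-\partial\bar\partial e^{-2\varphi}$ has the wrong constant in front of $\Delta\varphi$ (one finds $e^{-2\varphi}(2\varphi_{z\bar z}-4|\varphi_z|^2)$, and $\Delta\varphi=4\varphi_{z\bar z}$), though this does not affect the sign analysis; and your parenthetical about $w=0$ reverses itself — on $\partial\Omega$, $w=0$ does force $e^{-2\varphi(z)}=0$, i.e.\ $z\in\partial D$, so $\varphi(z)$ is not finite there — but as you correctly note at the end, $K\subset\subset D$ makes this moot.
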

\begin{proof}
The defining function $\rho=\sum_{k=1}^{N} |w_k|^2-e^{-2\varphi(z)}$. The hessian of $\rho$ is given by:
\begin{equation*}
\mathbb{H}(\rho)=
\begin{bmatrix}
-4e^{-2\varphi}|\frac{\partial \varphi}{\partial z}|^2+2e^{-2\varphi} \frac{\partial^{2}\varphi}{\partial z\partial\bar{z}} & 0 & \dots & 0\\
0 & 1 & \dots & 0\\
\vdots & \vdots & \ddots & 0\\
0 & 0 & \dots & 1
\end{bmatrix},
\end{equation*}
and the complex tangent space $\mathit{T}_{(z, w_1, w_2,..., w_n)}^{\mathbb{C}}(\partial\Omega)=\{(\tau,\xi_1,\dots,\xi_n)\mid 2e^{-2\varphi(z)} \frac{\partial\varphi}{\partial z} \tau+\overline{w_1}\xi_1+\dots+\overline{w_n}\xi_n=0\}$.

Since $\Omega$ is smooth, $\lim_{z\rightarrow\partial D}\varphi (z)=+\infty$, and thus $\partial\Omega\cap\{w_1=\dots =w_n=0\}=\{(z,0,\dots,0); z\in\partial D\}$. Also by the assumption $K\subset\subset D$, any point on $\pi^{-1}(K)$ has at least one nonzero $w_k$. Without loss of generality, let $w_1\neq0$. Then if $\frac{\partial\varphi}{\partial z}=0$, $\mathit{T}_{(z, w_1, w_2,..., w_n)}^{\mathbb{C}}(\partial\Omega)=\{(\tau, -\frac{\overline{w_2}\xi_2+\dots+\overline{w_n}\xi_n}{\overline{w_1}}, \xi_2,\dots,\xi_n)\}$. The Levi form is nonnegative, and by taking ${\xi_2=\dots=\xi_n=0}$, it achieves 0 when $\frac{\partial^{2}\varphi}{\partial z\partial\bar{z}}=\frac{1}{4}\Delta\varphi=0$. On the other hand, if $\frac{\partial\varphi}{\partial z}\neq0$, $\mathit{T}_{(z, w_1, w_2,..., w_n)}^{\mathbb{C}}(\partial\Omega)=\{(-\frac{\overline{w_1}\xi_1+\dots+\overline{w_n}\xi_n}{2e^{-2\varphi} \frac{\partial\varphi}{\partial z}},\xi_1,\dots,\xi_n)\}$. The Levi forms applying to these tangent vectors are:
\begin{equation}\label{mj,eq1}
\begin{split}
~ &-\frac{|\overline{w_1}\xi_1+\dots+\overline{w_n}\xi_n|^2}{e^{-2\varphi}}+\frac{1}{2}e^{2\varphi} \frac{\partial^{2}\varphi}{\partial z\partial\bar{z}} \frac{|\overline{w_1}\xi_1+\dots+\overline{w_n}\xi_n|^2}{ |\frac{\partial\varphi}{\partial z}|^2}+|\xi_1|^2+\dots+|\xi_n|^2\\
&=(-\frac{|\overline{w_1}\xi_1+\dots+\overline{w_n}\xi_n|^2}{|\overline{w_1}|^2+\dots+|\overline{w_n}|^2}+|\xi_1|^2+\dots+|\xi_n|^2)+\frac{1}{2}e^{2\varphi} \frac{\partial^{2}\varphi}{\partial z\partial\bar{z}} \frac{|\overline{w_1}\xi_1+\dots+\overline{w_n}\xi_n|^2}{ |\frac{\partial\varphi}{\partial z}|^2}.
\end{split}
\end{equation}
The first part of the last line of $\eqref{mj,eq1}$ is nonnegative and achieves 0 if and only if $\frac{\xi_k}{\overline{w_k}}=c$, for all $1\leq k\leq n$. The second part is nonnegative and getting zero when $\frac{\partial^{2}\varphi}{\partial z\partial\bar{z}}=\frac{1}{4}\Delta\varphi=0$. In general, the weakly pseudoconvex points of $\pi^{-1}(K)$ are excatly $\pi^{-1}(K\cap\{z\in D\mid\Delta\varphi(z)=0\})$.
\end{proof}

\begin{lem}\label{mj;lem2}
Let $\Omega$ be as above, $K$ be a compact set in $D$. If $K$ satisfies Property $(P)$, so does $\pi^{-1}(K)$.
\end{lem}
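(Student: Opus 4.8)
The plan is to reduce Property $(P)$ of $\pi^{-1}(K)$ in $\mathbb{C}^{n+1}$ to Property $(P)$ of $K$ in $\mathbb{C}$ by transplanting a bounded plurisubharmonic bumping function. By Proposition \ref{mj;pro2}, $K\subset\mathbb{C}$ satisfies Property $(P)$ if and only if $-|z|^2\in P(K)$, i.e.\ for every $M>0$ there is a neighborhood $U$ of $K$ in $\mathbb{C}$ and $\lambda\in C^2(U)$ with $0\le\lambda\le 1$ and $\frac{\partial^2\lambda}{\partial z\partial\bar z}\ge M$ on $K$. The goal is to manufacture, for every $M>0$, a $C^2$ function $\Lambda$ on a neighborhood $V$ of $\pi^{-1}(K)$ in $\mathbb{C}^{n+1}$ with $0\le\Lambda\le 1$ whose complex Hessian has every eigenvalue $\ge M$ at each point of $\pi^{-1}(K)$ (note that since we are dealing with Property $(P)=(P_1)$, we need the full Hessian positive, which matches the $q=1$ definition).

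The key steps, in order, are as follows. First, fix $M$ and pull back the one-variable bump: set $\Lambda_1(z,w)=\lambda(z)$ on $\pi^{-1}(U)$; this contributes $M$ to the eigenvalue in the $z$-direction but is degenerate in the $w_k$-directions. Second, to fix the $w$-directions, add a term built from $|w_k|^2$; but $\pi^{-1}(K)$ need not be bounded a priori in a way that keeps such a term below $1$ — however, since $K\subset\subset D$ and $\varphi$ is continuous on a neighborhood of $K$, the set $\pi^{-1}(K)$ is relatively compact in $\mathbb{C}^{n+1}$, hence contained in some polydisc, so after rescaling $\frac{1}{R^2}\sum_k|w_k|^2$ is bounded by $1$ and contributes a fixed positive amount (independent of how large $M$ is only up to scaling) to the $w$-eigenvalues. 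Third, combine: take $\Lambda=\frac{1}{2}\Lambda_1+\frac{1}{2}\Lambda_2$ where $\Lambda_2$ is the rescaled $\sum|w_k|^2$, but this only gives eigenvalue $\min(M/2,\,\text{const}/2)$, which is \emph{not} uniformly large in the $w$-directions. The correct device is to scale the $w$-term up: use $\Lambda=\lambda(z)\cdot\eta + c_M\sum_k|w_k|^2$ after shrinking, or more cleanly, first dilate coordinates $w_k\mapsto w_k/\sqrt{M}$ so that $\sum|w_k|^2$ still fits in the unit interval on the (bounded) set while its Hessian eigenvalue in $w$ becomes $M$; simultaneously the pulled-back $\lambda$ is unaffected in the $z$-Hessian. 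One must check the mixed terms: $\Lambda_1$ depends only on $z$ and $\Lambda_2$ only on $w$, so the complex Hessian of $\Lambda_1+\Lambda_2$ is block diagonal, and each block has all eigenvalues $\ge M$ on $\pi^{-1}(K)$. Finally, normalize so $0\le\Lambda\le1$ by an affine adjustment that does not decrease the Hessian (scaling down by the sup of $\Lambda$ would decrease it, so instead one shrinks the starting $M$-requirement by a factor absorbing $\sup|\Lambda|$ — i.e.\ prove the weaker bound $\ge M$ for the \emph{normalized} function by starting from the requirement $\ge M\cdot(\sup\Lambda)$ in Proposition \ref{mj;pro2}, which is harmless since $M$ is arbitrary).

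The main obstacle is the bookkeeping around the normalization $0\le\Lambda\le1$ together with the eigenvalue lower bound, because these pull in opposite directions: making the Hessian large tends to make the function large, and rescaling to bring it back into $[0,1]$ shrinks the Hessian. The resolution is the standard one — Property $(P_q)$ is scale-free in the sense that one may replace $M$ by any fixed multiple of $M$ — so one should carefully track the constants: the $z$-part $\lambda$ already lies in $[0,1]$ with Hessian $\ge CM$ for $C$ absorbing all later losses; the $w$-part, on the relatively compact set $\pi^{-1}(K)$ contained in $\{|w|<R\}$, is $\frac{CM}{R^2}\sum|w_k|^2$, bounded by $CM$ and with $w$-Hessian $\ge CM$; their sum lies in $[0, 2CM]$ with all eigenvalues $\ge CM$ on $\pi^{-1}(K)$; divide by $2CM$ to land in $[0,1]$ with all eigenvalues $\ge 1/2 > 0$ — and since we began by demanding the one-variable bump have Hessian $\ge 2CM\cdot M'$ rather than $M'$ (legitimate as $M'$ ranges over all positive numbers), the final normalized function has all eigenvalues $\ge M'$ on $\pi^{-1}(K)$. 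Thus $-|z|^2$ restricted appropriately, or rather $-(|z|^2+|w|^2)$, lies in $P(\pi^{-1}(K))$, and Proposition \ref{mj;pro2} gives Property $(P)$ for $\pi^{-1}(K)$.
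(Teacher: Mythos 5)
Your plan does not work, and the obstruction is structural rather than a bookkeeping issue. You are trying to manufacture, for every $M$, a function $\Lambda$ with $0\le\Lambda\le1$ on a fixed neighborhood of $\pi^{-1}(K)$ whose complex Hessian has all eigenvalues $\ge M$, by taking $\Lambda=\lambda(z)+c\sum_k|w_k|^2$ (suitably rescaled). But the $w$-part is a quadratic, whose complex Hessian in the $w$-directions is the constant $c$. To have $w$-eigenvalues $\ge M$ you need $c\ge M$; to have $c\sum|w_k|^2\le 1$ on the set (which sits inside $\{|w|\le R\}$ with $R>0$ \emph{fixed} by the geometry of $K$ and $\varphi$) you need $c\le 1/R^2$. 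These are incompatible once $M>1/R^2$. Your attempted fixes do not escape this: dividing by $2CM$ to renormalize divides the Hessian by $2CM$ as well, leaving you with $w$-eigenvalue of order a fixed constant, not $M$; and the dilation $w_k\mapsto w_k/\sqrt{M}$ is self-defeating because the Hessian scales by the \emph{inverse square} of a dilation, so whatever you gain in the dilated coordinates you lose exactly when you pull back. Put differently, your construction, if it worked, would show that the product $K\times\overline{B(0,R)}$ satisfies Property $(P)$; it does not, because a closed ball has nonempty interior and supports no bounded $C^2$ function with arbitrarily large Laplacian.

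The paper's proof avoids this by never attempting a $w$-direction bump at all. The crucial structural fact is that $\pi^{-1}(K)$ is not a product but the sphere bundle $\{\sum_k|w_k|^2=e^{-2\varphi(z)},\, z\in K\}$: on $\pi^{-1}(K)$ the quantity $\sum_k|w_k|^2$ is a function of $z$ alone. Hence the restriction of $-|z|^2-\sum_k|w_k|^2$ to $\pi^{-1}(K)$ equals $-|z|^2-e^{-2\varphi(z)}$, a continuous function of $z$ on $K$. Since $K$ has Property $(P)$, Proposition~\ref{mj;pro2}(3) gives $P(K)=C(K)$, so this function is a uniform limit on $K$ of functions subharmonic near $K$; viewing each approximant as a $w$-independent plurisubharmonic function of $(z,w)$ near $\pi^{-1}(K)$ shows $-|z|^2-\sum_k|w_k|^2\in P(\pi^{-1}(K))$, and Proposition~\ref{mj;pro2}(2) concludes. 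Your attempt misses exactly this observation -- that the constraint defining $\pi^{-1}(K)$ ties the $w$-variables to $z$, so no separate $w$-bump is needed and indeed none can be made.
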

\begin{proof}
By proposition $\ref{mj;pro2}$ (2), it suffices to check whether the function $-|z|^2-|w_1|^2-\dots-|w_n|^2$ belongs to $P(\pi^{-1}(K))$. On $\pi^{-1}(K)$, $-|z|^2-|w_1|^2-\dots-|w_n|^2=-|z|^2-e^{-\varphi(z)}$. Where $-|z|^2-e^{-\varphi(z)}$ can be viewed as a function in $C(K)$. By assumption and (3) of Proposition $\ref{mj;pro2}$, $-|z|^2-e^{-\varphi(z)}$ can be approximated uniformly on $K$ by subharmonic funtions near $K$. Moreover, these functions can be viewed as a plurisubharmonic functions of $(z,w_1,\dots,w_n)$ near $\pi^{-1}(K)$. Thus $-|z|^2-|w_1|^2-\dots-|w_n|^2\in P(\pi^{-1}(K))$. Hence $\pi^{-1}(K)$ satifies Property (P).
\end{proof}

\begin{lem}\label{mj;lem4}
Let $\Omega$ be as above, $\Omega'=\{(z', w_1', w_2',\dots, w_n')\mid\sum_{k=1}^{n} |w_k'|^2<e^{-2\varphi(z)-2\alpha}, z\in D\}$, where $\alpha=\inf_{z\in D} \varphi(z)$. Let $T$: $\Omega\rightarrow\Omega'$ be given by $T(z, w_1, \cdots, w_n)=(z, \frac{w_1}{t}, \cdots, \frac{w_n}{t}$), where $t=\max\{e^{-\alpha},1\}$. Then Hankel operator $H_{\phi}$ is compact on $A^2 (\Omega)$ for all $\phi\in C^{\infty}(\overline{\Omega})$ if and only if the Hankel operator $H_{\phi'}$ is compact on $A^2 (\Omega')$ for all $\phi'\in C^{\infty}(\overline{\Omega'})$.

\end{lem}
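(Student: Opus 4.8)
The plan is to realize $T$ as the restriction of a linear automorphism of $\mathbb{C}^{n+1}$, use it to set up a unitary equivalence between $A^2(\Omega')$ and $A^2(\Omega)$ that is compatible with the two Bergman projections, and then read off that compactness of the whole family of Hankel operators is transported in both directions. Concretely, I would first record the elementary facts about $T$. It is the restriction to $\Omega$ of the $\mathbb{C}$-linear map $\Phi(z,w_1,\dots,w_n)=(z,w_1/t,\dots,w_n/t)$ of $\mathbb{C}^{n+1}$, which maps $\Omega$ biholomorphically onto $\Omega'$ (a one-line check with the defining inequalities) and has constant complex Jacobian $\det\Phi'\equiv t^{-n}$. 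Since $\Phi$ and $\Phi^{-1}$ are linear, hence smooth everywhere, composition with $T$ gives a bijection $C^\infty(\overline{\Omega})\to C^\infty(\overline{\Omega'})$, $\phi\mapsto\phi':=\phi\circ T^{-1}$. By the change-of-variables formula the operator $U$ defined by $Uf=t^{-n}(f\circ T)$ is unitary from $L^2(\Omega')$ onto $L^2(\Omega)$ and restricts to a unitary operator from $A^2(\Omega')$ onto $A^2(\Omega)$.

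The key step is the intertwining identity $\textbf{B}\,U=U\,\textbf{B}'$ on $L^2(\Omega')$, where $\textbf{B},\textbf{B}'$ are the Bergman projections of $\Omega,\Omega'$. This is the standard transformation law for the Bergman projection under a biholomorphism: from $K_\Omega(p,q)=\det\Phi'(p)\,K_{\Omega'}(Tp,Tq)\,\overline{\det\Phi'(q)}$ one obtains $\textbf{B}\big((\det\Phi')(g\circ T)\big)=(\det\Phi')\big((\textbf{B}'g)\circ T\big)$, and because $\det\Phi'$ is the constant $t^{-n}$ this is precisely $\textbf{B}Ug=U\textbf{B}'g$. (The constancy of the Jacobian is what makes this clean; otherwise one would have to carry the Jacobian factor along.) Together with the obvious intertwining of multiplication operators $M_\phi\,U=U\,M_{\phi'}$, which holds because $\phi=\phi'\circ T$ forces $\phi\,(g\circ T)=(\phi'g)\circ T$, this gives, for $f\in A^2(\Omega')$,
\[
H_\phi\,Uf \;=\; (\textbf{B}-I)M_\phi Uf \;=\; (\textbf{B}-I)UM_{\phi'}f \;=\; U(\textbf{B}'-I)M_{\phi'}f \;=\; U\,H_{\phi'}f,
\]
so that $H_{\phi'}=U^{-1}H_\phi U$ as operators $A^2(\Omega')\to L^2(\Omega')$.

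Finally, since $U$ is unitary, $H_\phi$ is compact on $A^2(\Omega)$ if and only if $H_{\phi'}$ is compact on $A^2(\Omega')$; and since $\phi\mapsto\phi\circ T^{-1}$ is a bijection of $C^\infty(\overline{\Omega})$ onto $C^\infty(\overline{\Omega'})$, the statement "$H_\phi$ compact for all $\phi\in C^\infty(\overline{\Omega})$" is equivalent to "$H_{\phi'}$ compact for all $\phi'\in C^\infty(\overline{\Omega'})$", which is the assertion of the lemma. I do not expect a genuine obstacle here; the only points that require a little care are checking that $T$ really maps $\Omega$ onto $\Omega'$ from the defining inequalities and quoting (or re-deriving) the Bergman-projection transformation law, after which the argument is pure bookkeeping with the unitary $U$.
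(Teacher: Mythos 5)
Your proof is correct and takes essentially the same route as the paper's own proof: both exploit the linear biholomorphism $T$ and the fact that its complex Jacobian is the constant $t^{-n}$. The paper is terser -- it pulls a bounded sequence $\{f_j'\}\subset A^2(\Omega')$ back to $\{f_j'\circ T\}\subset A^2(\Omega)$, extracts a Cauchy subsequence of Hankel images, and pushes forward, asserting the crucial identity $(H_\phi f_{j_k})\circ T^{-1}=H_{\phi'}f_{j_k}'$ without comment. Your packaging via the unitary $Uf=t^{-n}(f\circ T)$ and the intertwining $\mathbf{B}U=U\mathbf{B}'$ is what actually justifies that identity, so you are not doing anything different in substance, but you are supplying the step the paper leaves implicit, and the conjugation formula $H_{\phi'}=U^{-1}H_\phi U$ makes the ``if and only if'' symmetry immediate rather than ``similarly, the other direction.''

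One caveat about the ``one-line check'' that $T(\Omega)=\Omega'$: substituting $w_k=tw_k'$ into $\sum_{k=1}^n|w_k|^2<e^{-2\varphi(z)}$ gives $\sum_{k=1}^n|w_k'|^2<e^{-2(\varphi(z)+\log t)}$, which is exactly the defining inequality the paper uses later in the proof of $(4)\Rightarrow(5)$. With $t=\max\{e^{-\alpha},1\}$ this is \emph{not} equal to the exponent $e^{-2\varphi(z)-2\alpha}$ written in the lemma's statement of $\Omega'$ for any $\alpha\ne 0$ (when $\alpha<0$ one gets $e^{-2\varphi+2\alpha}$, a sign off; when $\alpha>0$ one gets $e^{-2\varphi}$); the statement of $\Omega'$ appears to carry a typo. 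Once $\Omega'$ is read as $\{\sum_{k=1}^n|w_k'|^2<e^{-2(\varphi(z)+\log t)}\}$, your check and the rest of the proposal go through without modification.
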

\begin{proof}
Note that since $\varphi$ is subharmonic, $\alpha$ and $t$ are finite. $T$ is a biholomorphism and $T, T^{-1}$ are both smooth up to the boundary. Let $\{f_{j}'\}_{j=1}^{\infty}$ be a bounded sequence of functions in $A^{2}(\Omega')$, $\phi'\in C^{\infty}(\overline{\Omega'})$. By the definition of $T$, $\{f_{j}=f_{j}'\circ T\}_{j=1}^{\infty}$ is a bounded sequence functions in $A^{2}(\Omega)$ and $\phi=\phi'\circ T\in C^{\infty}(\bar{\Omega})$. Assume $H_{\phi}$ is compact, then there exists a subsequence of $\{f_{j_k}\}$ such that $\{H_{\phi} f_{j_k}\}$ is Cauchy in $L^2(\Omega)$. Therefore $\{H_{\phi} f_{j_k}\circ T^{-1}\}=\{H_{\phi'} f_{j_k}'\}$ is also Cauchy in $L^2(\Omega')$, so $H_{\phi'}$ is compact. Similarly, the other direction is also true.
\end{proof}

We also need following lemmas, and proofs follow approaches in Lemma 1 and Lemma 2 in \cite{SZ}. 
\begin{lem}\label{mj;lem5}
Let $B(0,r)=\{w\in\mathbb{C}; |w|<r\}$ for $0<r<\infty$ and $d(w)$ be the distance from $w$ to $\partial B(0,r)$. Then for any positive integer $n$,
\begin{equation}\label{mj;eq3}
\int_{B(0,r)} (d(w))^{2} |w|^{2n}d\,V(w)\leq \frac{r^2}{2n^2} \int_{B(0,r)} |w|^{2n}d\,V(w) .
\end{equation}
\end{lem}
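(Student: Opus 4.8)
The plan is to compute both integrals in polar coordinates and reduce the inequality to a one-variable estimate in the radial variable. Writing $w = \rho e^{i\theta}$ with $0 \le \rho < r$, we have $d(w) = r - \rho$ and $dV(w) = \rho\, d\rho\, d\theta$, so the angular integration contributes a factor $2\pi$ on both sides and cancels. Thus \eqref{mj;eq3} is equivalent to
\begin{equation*}
\int_0^r (r-\rho)^2 \rho^{2n+1}\, d\rho \le \frac{r^2}{2n^2} \int_0^r \rho^{2n+1}\, d\rho.
\end{equation*}
The right-hand side is $\dfrac{r^2}{2n^2}\cdot\dfrac{r^{2n+2}}{2n+2} = \dfrac{r^{2n+4}}{4n^2(n+1)}$.

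Next I would evaluate the left-hand side explicitly by expanding $(r-\rho)^2 = r^2 - 2r\rho + \rho^2$ and integrating term by term:
\begin{equation*}
\int_0^r (r-\rho)^2 \rho^{2n+1}\, d\rho = r^2\cdot\frac{r^{2n+2}}{2n+2} - 2r\cdot\frac{r^{2n+3}}{2n+3} + \frac{r^{2n+4}}{2n+4} = r^{2n+4}\left(\frac{1}{2n+2} - \frac{2}{2n+3} + \frac{1}{2n+4}\right).
\end{equation*}
Combining the three fractions over the common denominator $(2n+2)(2n+3)(2n+4)$, the numerator is $(2n+3)(2n+4) - 2(2n+2)(2n+4) + (2n+2)(2n+3)$, which simplifies to $2$; hence the left-hand side equals $\dfrac{2r^{2n+4}}{(2n+2)(2n+3)(2n+4)}$.

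So the claimed inequality reduces to
\begin{equation*}
\frac{2}{(2n+2)(2n+3)(2n+4)} \le \frac{1}{4n^2(n+1)},
\end{equation*}
i.e. $8n^2(n+1) \le (2n+2)(2n+3)(2n+4)$. Since $(2n+2)(2n+3)(2n+4) = 8(n+1)(2n+3)(n+2) \ge 8(n+1)\cdot n \cdot n = 8n^2(n+1)$ (using $2n+3 \ge n$ and $n+2 \ge n$, both clear for $n \ge 1$), the inequality holds, in fact with plenty of room to spare. I do not anticipate a genuine obstacle here; the only mild care needed is bookkeeping in the partial-fraction simplification, and noting that the bound is far from tight (one could even replace $2n^2$ by something like $(2n+3)(2n+4)/4$), but the stated form is all that is needed for the later argument, so I would not pursue the sharpest constant.
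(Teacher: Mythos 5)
Your approach is essentially the same as the paper's: both reduce to polar coordinates, compute the two radial integrals explicitly (the paper records the Beta-function value $\int_{B(0,r)}(d(w))^2|w|^{2n}\,dV=\frac{\pi r^{2n+4}}{(n+1)(n+2)(2n+3)}$, while you expand $(r-\rho)^2$ and integrate term by term, which gives the same thing), and then check the resulting numerical inequality. One small slip at the end: $(2n+2)(2n+3)(2n+4)=4(n+1)(n+2)(2n+3)$, not $8(n+1)(2n+3)(n+2)$; with the corrected coefficient you should bound $2n+3\ge 2n$ (rather than $\ge n$) together with $n+2\ge n$ to obtain $4(n+1)(n+2)(2n+3)\ge 8n^2(n+1)$, after which the argument is complete.
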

\begin{proof}
For right hand side of the last inequality,
$$
\int_{B(0,r)} |w|^{2n}d\,V(w)=2\pi\int_{0}^{r} |w|^{2n+1}\,d\,|w|=\frac{\pi r^{2n+2}}{n+1}.
$$
On the other hand,
\begin{equation*}
\begin{split}
\int_{B(0,r)} (d(w))^{2} |w|^{2n}d\,V(w) &=2\pi\int_{0}^{r}(r-|w|)^{2} |w|^{2n+1}\,d\,|w|\\
&=\frac{\pi r^{2n+4}}{(n+1)(n+2)(2n+3)}.
\end{split}
\end{equation*}
Thus we have
\begin{equation*}
\frac{n^{2}\int_{A(0,r)} (d(w))^{2} |w|^{2n}d\,V(w)}{\int_{A(0,r)} |w|^{2n}d\,V(w)}=\frac{n^{2}r^{2}}{(n+2)(2n+3)}\leq \frac{r^2}{2}.
\end{equation*}
where $\frac{r^2}{2}$ is a finite number. Therefore $\eqref{mj;eq3}$ is satisfied.
\end{proof}

\begin{lem}\label{mj;lem6}
Let $\Omega$ be a bounded domain in $\mathbb{C}^{n}$. For any $f\in W^{-1}(\Omega)$, there exists $C>0$, so that $$\|f\|_{-1}\leq C \|d_{\Omega}f\|,$$
where  $d_{\Omega}(z)$ denoted the distance between $z$ and the boundary of $\Omega$.
\end{lem}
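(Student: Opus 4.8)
\emph{Proof proposal.} The plan is to combine a duality argument with the $L^2$ Hardy inequality. We may assume $d_\Omega f\in L^2(\Omega)$, since otherwise the right-hand side is infinite and there is nothing to prove. By definition,
\[
\|f\|_{-1}=\sup\{\,|\langle f,u\rangle| : u\in C_0^\infty(\Omega),\ \|u\|_{W^1(\Omega)}\le 1\,\},
\]
so it suffices to bound $|\langle f,u\rangle|$ by $C\|d_\Omega f\|\,\|u\|_{W^1(\Omega)}$ for a single test function $u$. The key observation is that on $\operatorname{supp}u$, a compact subset of $\Omega$, the distance function $d_\Omega$ is bounded below by a positive constant, so $v:=u/d_\Omega$ is a well-defined, compactly supported Lipschitz function, hence $v\in W_0^1(\Omega)$, and $d_\Omega\cdot v=u$ throughout $\Omega$. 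Since $d_\Omega\in W^{1,\infty}(\Omega)$, multiplication by $d_\Omega$ maps $W_0^1(\Omega)$ into itself, so it acts as a multiplier on $W^{-1}(\Omega)$ with $\langle d_\Omega f,g\rangle=\langle f,d_\Omega g\rangle$, and when $d_\Omega f\in L^2(\Omega)$ this pairing coincides with the $L^2$ inner product. Therefore, by Cauchy--Schwarz,
\[
|\langle f,u\rangle| = |\langle f, d_\Omega v\rangle| = |\langle d_\Omega f,\, v\rangle_{L^2}| \le \|d_\Omega f\|\,\|u/d_\Omega\|_{L^2}.
\]

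It then remains to dominate $\|u/d_\Omega\|_{L^2}$ by $\|\nabla u\|_{L^2}$, which is precisely the $L^2$ Hardy inequality: on a bounded domain with Lipschitz boundary, and in particular on the smooth bounded domains to which this lemma is applied later, there is a constant $C=C(\Omega)$ with $\int_\Omega |u|^2/d_\Omega^2 \le C\int_\Omega |\nabla u|^2$ for all $u\in C_0^\infty(\Omega)$. Combining this with the display above and with the trivial bound $\|\nabla u\|\le\|u\|_{W^1(\Omega)}$ gives $|\langle f,u\rangle|\le C\,\|d_\Omega f\|\,\|u\|_{W^1(\Omega)}$, and taking the supremum over $u$ yields $\|f\|_{-1}\le C\,\|d_\Omega f\|$.

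I expect the only real content to be the Hardy inequality (and the routine fact that $d_\Omega$ maps $W^{-1}$ into itself); the duality packaging is completely formal. One may either cite a standard reference for Hardy's inequality on bounded Lipschitz (or smooth) domains, or give a self-contained proof: for $u\in C_0^\infty(\Omega)$ and $z\in\Omega$, integrating $\nabla u$ along the segment from $z$ to a nearest boundary point gives $|u(z)|\le\int_0^{d_\Omega(z)}|\nabla u|\,ds$ along that segment, and a one-dimensional Hardy inequality together with a change of variables near $\partial\Omega$ (using that the nearest-point projection is well behaved on a smooth domain) produces the estimate. Thus the handling of the boundary geometry is the main, though standard, obstacle, and no genuinely new idea is required.
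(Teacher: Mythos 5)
Your proposal is correct and follows essentially the same route as the paper: the paper also writes $\|f\|_{-1}$ as a supremum over test functions $\phi$ with $\|\phi\|_1\le 1$, splits off $d_\Omega$ by Cauchy--Schwarz to get $|\langle f,\phi\rangle|\le\|d_\Omega f\|\,\|\phi/d_\Omega\|$, and bounds $\|\phi/d_\Omega\|\le C\|\phi\|_1$ by the Hardy--Littlewood lemma (citing the proof of Theorem C.3 in Chen--Shaw), which is exactly your Hardy inequality step. Your remark that some boundary regularity (Lipschitz/smooth, as in the eventual application) is needed for that inequality is a fair caveat that the paper leaves implicit in its citation.
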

\begin{proof}
By the definition of $W^{-1}$ norm, 
\begin{equation*}
\begin{split}
\|f\|_{-1} &=\sup \{|<f,\phi>|;\phi\in C_{0}^{\infty}(\Omega),\|\phi\|_{1}\leq1\}\\
&\leq\|d_{\Omega}f\|\sup\{\|\phi/d_{\Omega}\|;\phi\in C_{0}^{\infty}(\Omega),\|\phi\|_{1}\leq1\}\\
&\leq C \|d_{\Omega}f\|.
\end{split}
\end{equation*}
Last inequality is an application of Hardy-Littlewood lemma (see the proof of Theorem C.3 on P.347 in \cite{ChSh}). 
\end{proof}

\section{Proof of Theorem $\ref{mj;thm1}$}

In this section, we prove the following theorem, following the approaches in \cite{CF}, \cite{FS3} and \cite{SZ}.

\begin{thm}
Let $D\subset \mathbb{C}$ be a smooth bounded domain, $\varphi$ be a subharmonic function on $D$. Let $\Omega=\{(z, w_1, w_2,\dots, w_n)\mid\sum_{k=1}^{n} |w_k|^2<e^{-2\varphi(z)}, z\in D\}$ be a smooth pseudoconvex Hartogs domain in $\mathbb{C}^{n+1}$. Then the following statements are equivalent:\\
(1) $\lim_{m\rightarrow\infty} \lambda_{m\varphi}^{0}(D)=\infty$.\\
(2) $\partial\Omega$ satisfies Property (P).\\
(3) $\bar{\partial}$-Neumann operator $\mathit{N_1}$ is compact on $L^2_{(0, 1)}(\Omega)$.\\
(4) The Hankel operator $H_{\phi}$ is compact on $A^2 (\Omega)$ for all $\phi\in C^{\infty}(\overline{\Omega})$.\\
(5) $\lim_{m\rightarrow\infty} \lambda_{m\varphi}(D)=\infty$.\\
\end{thm}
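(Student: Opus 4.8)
The plan is to prove the chain of equivalences by establishing a cycle of implications, using the one-dimensional reductions of Property $(P)$ (Proposition~\ref{mj;pro1}) to connect the Schrödinger-eigenvalue conditions (1) and (5) to the geometric condition (2), and using Catlin's theorem together with Proposition~\ref{mj;pro3} and Lemma~\ref{mj;lem3} to close the loop through the analytic conditions (3) and (4). Concretely I would aim for: $(1)\Rightarrow(2)\Rightarrow(3)\Rightarrow(4)\Rightarrow(5)\Rightarrow(1)$, with the last implication $(5)\Rightarrow(1)$ being the contrapositive of Theorem~\ref{Christ-Fu;thm1} (if $\sup_m\lambda^0_{m\varphi}<\infty$ then $\liminf_m\lambda_{m\varphi}<\infty$, so $\neg(1)\Rightarrow\neg(5)$); this needs the mild regularity hypothesis on $\Delta\varphi$, which I would assume or remark is automatic here since $\Omega$ is smooth so $\varphi\in C^\infty(D)$.

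For $(2)\Rightarrow(3)$, I would simply invoke Catlin's Theorem~\ref{Catlin;thm1}: Property $(P)=(P_1)$ of $\partial\Omega$ implies $N_1$ is compact. For $(3)\Rightarrow(4)$, this is exactly Proposition~\ref{mj;pro3}. For $(4)\Rightarrow(5)$, I would first use Lemma~\ref{mj;lem4} to normalize so that $\varphi\ge 0$ on $D$ (so $e^{-\varphi}\le 1$ and the $w$-fibers are disks of radius $\le 1$), then argue by contraposition: assuming $\lambda_{m\varphi}(D)\not\to\infty$, pick a subsequence and, via~\eqref{mj;eq1.1}, functions $u_m\in C_0^\infty(D)$ with $\int_D|u_m|^2 e^{2m\varphi}=1$ and $4\int_D|\partial_z u_m|^2 e^{2m\varphi}$ bounded; from these build holomorphic test functions $h_m(z,w)=u_m(z)w_1^{?}$-type monomials on $\Omega$ (more precisely $h_m = c_m\,u_m(z)\, w_1^{m-1}$ or a suitable power, chosen so that $\int_\Omega|h_m|^2$ is controlled by the weighted one-dimensional integrals after integrating out $w$), and with symbol $\phi$ essentially $\bar w_1$ (or a smooth function agreeing with it) show that $\|H_\phi h_m\|$ does not go to zero while $\|h_m\,\bar\partial\phi\|_{-1}\to 0$ by Lemma~\ref{mj;lem6} and Lemma~\ref{mj;lem5}; this contradicts the compactness estimate~\eqref{mj;eq2} from Lemma~\ref{mj;lem3}, giving $(4)\Rightarrow(5)$.

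Finally, for $(1)\Rightarrow(2)$ I would use Proposition~\ref{mj;pro1}(4) combined with the geometry of $\partial\Omega$ described in Lemma~\ref{mj;lem1}: Property $(P)$ of $\partial\Omega$ is a local condition, and away from the weakly pseudoconvex set it holds trivially (strictly pseudoconvex boundary points satisfy $(P)$); on the weakly pseudoconvex locus $\pi^{-1}(K\cap\{\Delta\varphi=0\})$ one reduces, via Lemma~\ref{mj;lem2} and a partition-of-unity/localization argument, to checking Property $(P)$ of compact subsets of $\{z\in D : \Delta\varphi(z)=0\}$, and condition (1) — divergence of $\lambda^0_{m\varphi}$ — forces these sets to have empty fine interior (equivalently $\lambda(U_j)\to\infty$ for shrinking neighborhoods), since on a set where $\Delta\varphi=0$ the weighted eigenvalue $\lambda^0_{m\varphi}$ of $S^0_{m\varphi}=-\Delta+m\Delta\varphi$ degenerates to the ordinary Dirichlet eigenvalue and staying bounded would produce a nonzero element of $W^1_0(\mathbb{C})$ supported there.

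The step I expect to be the main obstacle is $(4)\Rightarrow(5)$: constructing the holomorphic test functions $h_m$ on the $(n+1)$-dimensional Hartogs domain from the one-dimensional extremizers $u_m$ so that simultaneously (i) $\|h_m\|_{L^2(\Omega)}$ stays bounded below, (ii) $\|H_\phi h_m\|$ stays bounded below — which requires understanding the Bergman projection on $\Omega$ well enough to see that $\phi h_m$ is far from holomorphic — and (iii) the negative-norm term $\|h_m\bar\partial\phi\|_{-1}$ genuinely tends to zero, for which the decay of $d_\Omega$ near the boundary (Lemma~\ref{mj;lem6}) must beat the concentration of $|h_m|^2 e^{2m\varphi}$; getting the powers of $w_1$ and the normalization constants to balance across all these requirements, uniformly in $m$ and in the extra variables $w_2,\dots,w_n$, is the delicate bookkeeping that the higher-dimensional generalization demands beyond the $\mathbb{C}^2$ case of \cite{SZ}.
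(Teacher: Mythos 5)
Your cycle of implications and the easy steps line up with the paper: Catlin for $(2)\Rightarrow(3)$, Proposition~\ref{mj;pro3} for $(3)\Rightarrow(4)$, the contrapositive of Theorem~\ref{Christ-Fu;thm1} for $(5)\Rightarrow(1)$ (with the remark that $m\mapsto\lambda^0_{m\varphi}$ is monotone since $\Delta\varphi\ge 0$, so bounded $\liminf$ forces a bounded $\sup$), and the sketch of $(1)\Rightarrow(2)$ via exhaustion of $D$, Proposition~\ref{mj;pro1}(4), and Lemmas~\ref{mj;lem1}--\ref{mj;lem2}. The genuine gap is in $(4)\Rightarrow(5)$. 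The test functions you propose, $h_m=c_m\,u_m(z)\,w_1^{m-1}$ built from Dirichlet/Schr\"odinger extremizers $u_m\in C_0^\infty(D)$, are \emph{not} holomorphic in $z$ (compactly supported nonzero functions are never holomorphic), so they do not lie in $A^2(\Omega)$ and cannot be plugged into the Hankel operator; the plan cannot be carried out as written. Moreover, with your choice of symbol $\phi\approx\bar w_1$, $\|H_\phi h\|$ does not reduce to a one-variable weighted expression governed by $\lambda_{m\varphi}$.

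The paper argues directly (no contraposition, no extremizers): fix the holomorphic monomial $h=w_1^m$ and \emph{vary the symbol}. For each $\beta\in C_0^\infty(D)$ choose $\phi\in C^\infty(\overline{\Omega})$ depending only on $z$ with $\partial_{\bar z}\phi=\beta$. The crucial identity, absent from your plan, is $H_\phi(w_1^m)=-\phi(z)\,w_1^m$: writing $f_m=\bar\partial^*N(\beta\,w_1^m\,d\bar z)=\sum_I f_{mI}(z)\,w^I$, orthogonality of the monomials $w^I$ on the balls $\{\sum|w_k|^2<e^{-2\varphi(z)}\}$ plus $f_m\perp A^2(\Omega)$ kill every mode except $I=(m,0,\dots,0)$, leaving $f_m=\phi(z)w_1^m$. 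Substituting into the compactness estimate~\eqref{mj;eq2} of Lemma~\ref{mj;lem3}, absorbing the negative-Sobolev term via Lemmas~\ref{mj;lem5}--\ref{mj;lem6}, and integrating out the $w$-variables yields a weighted one-variable inequality whose supremum over $\beta$ is, by the variational characterization~\eqref{mj;eq1.1}, precisely $\lambda_{(m+n)\varphi}(D)\ge 1/(2\epsilon')$, hence $\lambda_{m\varphi}(D)\to\infty$. You should replace the extremizer-based construction with this monomial-plus-varying-symbol mechanism; the normalization via Lemma~\ref{mj;lem4} that you correctly identified (ensuring $\varphi\ge 0$) is what makes the final integration-by-parts and Cauchy--Schwarz steps close.
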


\begin{proof} By Theorem $\ref{Catlin;thm1}$, $''(2)\Rightarrow(3)''$ is clear. Also by Proposition $\ref{mj;pro3}$, we can get $''(3)\Rightarrow(4)''$. Moreover, $''(5)\Rightarrow(1)''$ follows from Theorem $\ref{Christ-Fu;thm1}$. 

\medskip

$''(1)\Rightarrow(2)''.$ Denote $W'=\{z\in D\mid\Delta\varphi(z)=0\}$. Let $\{K_j\}_{j=1}^{\infty}$ be a increasing compact sets of $D$, such that $D=\cup_{j=1}^{\infty} K_j$. Therefore, $\partial\Omega=\cup_{j=1}^{\infty} \pi^{-1}(K_j) \cup \pi^{-1}(\partial D)$. Here we follow the notation of lemma $\ref{mj;lem1}$, where $\pi$ denotes the projection from $\partial\Omega$ to $D$. Note that $\lim_{z\rightarrow\partial D} \varphi(z)=+\infty$ as $\partial\Omega$ is smooth. Moreover, $\partial D$ has no fine interior point in $\mathbb{C}$, as $\partial D$ is smooth. By proposition $\ref{mj;pro1}$, $\partial D$ satisfies Property $(P)$ in $\mathbb{C}$. Therefore, $\pi^{-1}(\partial D)=\{(z,0,\dots,0)\mid z\in\partial D\}$ satisfies Property $(P)$ in $\mathbb{C}^{n+1}$.

For each $j$, $K_j\cap W'$ is a compact set in $D$. We can find a sequence of open subsets $\{W_k^{j}\}_{k=1}^{\infty}$ of $D$, such that $(K_j\cap W')\subset\subset W_{k+1}^{j}\subset\subset W_{k}^{j} \subset\subset D$ and $(K_j\cap W')=\cap_k \overline{W_k^{j}}$. By the monotonicity of eigenvalues, for any $(m,k)\in\mathbb{N}\times\mathbb{N}$, $\lambda_{m\varphi}^{0}(D)\leq \lambda_{m\varphi}^{0}(W_k^{j})$. Also, for $u\in C_{0}^{\infty}(W_k^{j})$,
\begin{equation*}
(S_{m\varphi}^{0}u,u)=(-\Delta u+m\Delta\varphi,u)\leq(-\Delta u,u)+(u,u) .
\end{equation*}
When $k$ is big enough relative to $m$, so that $|m\Delta\varphi|\leq 1$. Thus $\lambda(W_k^j)\geq\lambda_{m\varphi}^{0}(W_k^j)-1\geq\lambda_{m\varphi}^{0}(D)-1$. By assumption, we can get that $\lambda(W_k^j)\rightarrow\infty$ as $k\rightarrow\infty$. By (4) of proposition $\ref{mj;pro2}$, it concludes that for each $j$, $K_j\cap W'$ satisfies Property (P). By Lemma $\ref{mj;lem1}$ and $\ref{mj;lem2}$, it implies that the subset of weakly pseudoconvex points of $\pi^{-1}(K_j)$ satisfies Property $(P)$, so does $\pi^{-1}(K_j)$.

By taking countable union of closed sets satisfying Property $(P)$, $\partial\Omega=\cup_{j=1}^{\infty} \pi^{-1}(K_j) \cup \pi^{-1}(\partial D)$ satisfies Property $(P)$ (see for example Corollary 4.14 in \cite{Str2}).

\medskip

$''(4)\Rightarrow(5)''$. First, notice that for any nonzero $m$ and fixed $t>0$, magnetic Schr\"{o}dinger operator $S_{m\varphi}=-[(\partial_x+im\varphi_y)^{2}+(\partial_y-im\varphi_x)^{2}]+m\Delta \varphi=S_{m(\varphi+\log(t))}$; nonmagnetic Schr\"{o}dinger operator $S_{m\varphi}^{0}=-\Delta+m\Delta\varphi=S_{m(\varphi+\log(t))}^{0}$. Thus first eigenvalues stay the same: $\lambda_{m\varphi}=\lambda_{m(\varphi+\log(t))}$, $\lambda_{m\varphi}^{0}=\lambda_{m(\varphi+\log(t))}^{0}$. Applying Lemma $\ref{mj;lem4}$, as  the defining function of rescaled domain $\Omega'$ is $|w_1'|^2+\dots+|w_n'|^2=e^{-2(\varphi(z')+\log(t))}$,  it suffices to prove $\lim_{m\rightarrow\infty} \lambda_{m(\varphi+\log(t))}(D)=\infty$ when $H_{\phi'}$ is compact on $A^{2}(\Omega')$ for all $\phi'\in C^{\infty}(\overline{\Omega'})$. For convenience, we still use $\Omega$ and $\varphi$ instead of $\Omega'$ and $\varphi+\log(t)$ respectively, and thus $\varphi\geq 0$ by the rescaling.

Let $\beta\in C_{0}^{\infty}(D)$, $u_m=\beta(z)w_1^{m} \,d\bar{z}$ and $f_{m}(z,w_1,\dots,w_n)=\bar{\partial}^{*}N(u_m)$. There exists $\phi\in C^{\infty}(\overline{D})$ such that $\partial\phi(z)/\partial\bar{z}=\beta(z)$. Componentwisely, $\partial f_m/\partial\bar{w}_k=0$ for all $1\leq k \leq n$ and $\partial f_m/\partial\bar{z}=\beta(z)w_1^{m}$. Therefore $f_m$ can be written as the following Taylor series: $$f_m (z, w_1,\dots,w_n)=\sum_I f_{mI}(z) w^{I},$$ where $I$ are multi-indexes, since $f_m$ is holomorphic with respect to each $w_k$, $1\leq k \leq n$. By Fubini's Theorem, it is well known that for any ball $B^n(r)$ centered at 0 with radius $r$, $$\int_{B^n (r)} w^{I}\overline{w^{J}}\,d\,V(w_1,\dots,w_n)= 0,$$ when $I\neq J$. It follows that for a given $z$,
\begin{equation*}
\int_{\sum_{k=1}^{n} |w_k|^2<e^{-2\varphi(z)}}w^{I}\overline{w^{J}}\,d\,V(w_1,\dots,w_n)\neq 0 ,
\end{equation*}
unless $I=J$.
So $\bar{\partial} f_m=\sum_{I} \frac{\partial f_{mI}}{\partial\bar{z}} w^{I}\,d\bar{z}$. Also since $f_m$ is orthogonal to $A^{2}(\Omega)$,  $f_{mI}=0$ unless $I=(m,0,\dots,0)$. Thus $f_{m}(z,w_1,\dots,w_n)=\phi(z)w_1^{m}$ where $\partial \phi(z)/\partial\bar{z}=\beta(z)$. By Lemma $\ref{mj;lem3}$,
\begin{equation} \label{mj;eq3.1}
\|H_{\phi}w_1^{m}\|^2=\|\phi(z) w_1^{m}\|^2\leq\epsilon\|\beta(z) w_1^{m}\|\|w_1^{m}\|+C_{\epsilon} \|\beta(z) w_1^m\|_{-1}\|w_1^{m}\| .
\end{equation}
By Lemma $\ref{mj;lem6}$, $\|\beta(z) w_1^{m}\|_{-1}\leq C\|d_{\Omega}(z,w_1,\dots,w_n)\beta(z) w_1^{m}\|$. Then

\begin{equation} \label{mj;eq3.2}
\begin{split}
\|\beta(z) w_1^{m}\|_{-1}^{2}& \leq C^{2}\int_{\Omega} (d_{\Omega}(z,w_1,\dots,w_n))^2 |\beta(z)|^{2}|w_1|^{2m}\,d\,V(z,w_1,\dots,w_n)\\
& \leq C^{2}\int_{D} |\beta(z)|^{2}\,dV(z)\int_{\sum_{k=2}^{n} |w_k|^2<e^{-2\varphi(z)}}\,dV(w_2,\dots,w_n)\\
&\int_{B(0,(e^{-2\varphi(z)}-\sum_{k=2}^{n} |w_k|^2)^{\frac{1}{2}})} (d_{B} (w_1))^{2} |w_1|^{2m}\,dV(w_1)\\
&\leq\frac{C'}{m^2}\|\beta(z){w_1}^{m}\|^{2} .
\end{split}
\end{equation}
\sloppy Here $d_{B} (w_1)$ denotes the distance between $w_1$ and the boundary of ball $B(0,(e^{-2\varphi(z)}-\sum_{k=2}^{n} |w_k|^2)^{\frac{1}{2}})$. Since the distance from a given point to the boundary of $\partial\Omega$ is no more than the distance from that point to $\partial\Omega$ through a given direction, the second inequality follows. The last inequality comes from Lemma $\ref{mj;lem5}$. Therefore combining ($\ref{mj;eq3.1}$) and ($\ref{mj;eq3.2}$),
\begin{equation}\label{mj;eq3.3}
\|\phi(z) w_1^{m}\|^2\leq 2\epsilon\|\beta(z) w_1^{m}\|\|w_1^{m}\| ,
\end{equation}
when $m$ is sufficiently large. Therefore,
\begin{equation*}
\begin{split}
\|\phi(z) w_1^m\|^2 & =\int_{\Omega} |\phi (z)|^{2}|w_1|^{2m}\,d\,V(z,w_1,\dots,w_n)\\
&=\int_{D} |\phi (z)|^{2}\,d\,V(z)\int_{B(0,e^{-\varphi(z)})}  |w_1|^{2m}\,d\,V(w_1)\int_{B(0,(e^{-2\varphi(z)}-|w_1|^2)^{\frac{1}{2}})}\,d\,V(w_2,\dots,w_n)\\
&=\frac{\pi^{n-1}}{(n-1)!}\int_{D} |\phi (z)|^{2}\,d\,V(z)\int_{B(0,e^{-\varphi(z)})}  |w_1|^{2m} (e^{-2\varphi(z)}-|w_1|^2)^{n-1} d\,V(w_1)\\
&=\frac{2\pi^{n}}{(n-1)!}\int_{D} |\phi (z)|^{2} \,d\,V(z)\int_{0}^{e^{-\varphi(z)}}  r^{2m+1} (e^{-2\varphi(z)}-r^2)^{n-1} d\,r\\
&=\frac{2\pi^{n}}{(n-1)!} \sum_{k=0}^{n-1}\frac{(-1)^{k} C_{n-1}^{k}}{2k+2m+2} \int_{D} |\phi (z)|^{2} e^{-2(n+m)\varphi(z)}\,d\,V(z) .
\end{split}
\end{equation*}
By similarly computation for $\|\beta(z){w_1}^{m}\|\|w_1^{m}\|$ and ($\ref{mj;eq3.3}$), we get
\begin{equation*}
\begin{split}
\int_{D} |\phi (z)|^2 e^{-2(n+m)\varphi(z)}\,d\,V(z)&\leq 2\epsilon(\int_{D} |\beta (z)|^{2} e^{-2(n+m)\varphi(z)}\,d\,V(z))^{1/2}(\int_{D} e^{-2(n+m)\varphi(z)}\,d\,V(z))^{1/2}\\
&\leq 2\epsilon'(\int_{D} |\beta (z)|^{2} e^{-2(n+m)\varphi(z)}\,d\,V(z))^{1/2} .
\end{split}
\end{equation*} 
As $\varphi$ is nonnegative by rescaling, the second inequality follows. $\epsilon'$ is a constant depending on $D$ and $\epsilon$.
Therefore, for any $u\in C_{0}^{\infty}(D)$
\begin{equation}\label{mj;eq3.4}
\begin{split}
\int_{D} |u (z)|^2 e^{2(n+m)\varphi(z)}\,d\,V(z)&=\sup\{|<u,\beta>|^2;\beta\in C_{0}^{\infty}(D),\int_{D} |\beta (z)|^2 e^{-2(n+m)\varphi(z)}\,d\,V(z)\leq 1 \}\\
&\leq\sup\{|<u_z,\phi>|^2; \int_{D} |\phi (z)|^2 e^{-2(n+m)\varphi(z)}\,d\,V(z)\leq 2\epsilon' \}\\
&\leq 2\epsilon' \int_{D} |u_z |^2 e^{2(n+m)\varphi(z)}\,d\,V(z) .
\end{split}
\end{equation}
By definition $\ref{mj;eq1.1}$ and (\ref{mj;eq3.4}), $\lambda_{(m+n)\varphi}(D)\rightarrow\infty$ as $m\rightarrow\infty$. Since $n$ is fixed, we obtain $\lambda_{m\varphi}(D)\rightarrow\infty$ as $m\rightarrow\infty$.

\end{proof}

\section*{Acknowledgement}
The author would like to express his gratitude to his advisor, Professor Yuan Yuan, for his guidance and encouragement. We thank Professor Siqi Fu, Professor S\"{o}nmez \c{S}ahuto\u{g}lu and Professor Yunus Zeytuncu for their useful comments. The author is supported in part by National Science Foundation grant DMS-1412384.

\noindent Muzhi Jin, mujin@syr.edu, \\
Department of Mathematics, Syracuse University, Syracuse, NY 13205, USA.

\end{document}